\newcommand{\X}[1]{\ensuremath{X(#1)}}
\newcommand{\Xw}[2]{\ensuremath{X(#1)_{/#2}}}
\newcommand{\xunrw}[2]{\ensuremath{\left(X(#1),\eta^{(p)}(#1)\right)_{/#2}}}
\newcommand{\ord}[1]{\ensuremath{\eta_p^{\mathrm{ord}}(#1)}}
\newcommand{\et}[1]{\ensuremath{\eta_p^{\mathrm{\acute{e}t}}(#1)}}
\newcommand{\xw}[2]{\ensuremath{\left(X(#1),\eta^{(p)}(#1),\et{#1}\times\ord{#1}\right)_{/#2}}}
\numberwithin{equation}{section}
\newtheorem{main}{Theorem}[section]
\newtheorem{taylor}{Proposition}[section]
\newtheorem*{recipe}{Recipe for a choice of $\lambda$}
\newtheorem{i-adic}{Proposition}[section]
\newtheorem{critical-character}[i-adic]{Proposition}
\begin{document}

\author{Miljan Brako\v cevi\'c}
\title{Two Variable Anticyclotomic $p$-adic L-functions for Hida families}
\date{\today}
\address{Department of Mathematics, McGill University, 
Montreal, QC, Canada H3A 0B9} 
\email{miljan.brakocevic@mcgill.ca}
\subjclass[2000]{11F67}
\keywords{Katz measure; Rankin--Selberg L-values; Serre--Tate deformation space; Hida family}
\thanks{This research work was partially supported by Prof. Haruzo Hida's NSF grant DMS-0753991
through graduate student research fellowship, UCLA Dissertation Year Fellowship, and by McGill University/CICMA grants through postdoctoral research fellowship.}

\begin{abstract}
For the anticyclotomic $p$-adic Rankin--Selberg L-function attached to a fixed Hecke eigenform and an imaginary quadratic field we introduce the second $p$-adic variable by considering Hida families of Hecke eigenforms parametrized by the weight.
\end{abstract}

\maketitle

\section{Introduction}
For a given normalized Hecke newform $f$ of level $\Gamma_0(N)$, $N\geq 1$, weight $k\geq1$, and nebentypus $\psi$, in \cite{Br11a} we constructed a bounded $p$-adic cuspidal measure $\mu_f$ following closely \cite{Ka} where Katz did that for  Eisenstein series. The Mazur--Mellin transform of $\mu_f$ interpolates the ``square roots'' of the central critical Rankin--Selberg L-values, for the Rankin product of the cusp form $f$ and a theta series of arithmetic Hecke characters of an imaginary quadratic field, in the case when the weight of the Hecke character is greater than that of the cusp form. 

The purpose of this paper is to  introduce the second $p$-adic variable by varying $f$ in a Hida family. This two-variable $p$-adic L-function is intimately connected with Howard's classes (\cite{Ho}) in the Galois cohomology of the $p$-adic Galois representation $V_f$ attached to $f$, which were constructed by taking the Kummer images of Heegner points on the modular abelian variety attached to $f$, and then interpolated as $f$ varies in a Hida family. Namely, the two-variable $p$-adic L-function also arises by taking images of the Howard's classes under Ochiai's exponential map interpolating the exponential map
of Bloch--Kato over the self-dual critical twist of a Hida family (\cite{Oc}, see also \cite{Ca}). The two-variable measure also provides a generalization of a result of Datskovsky and Guerzhoy (\cite{DaGe}) on $p$-adic behavior of Taylor coefficients of a Hecke eigenform at a CM point.

To state the main theorem precisely, we first introduce some notation and recall properties of the $p$-adic measure $\mu_f$. Let $M$ be an imaginary quadratic field, $R$ its ring of integers and $p$ a fixed prime so that the following assumption holds throughout the paper:
\[ \tag{ord}  p\; \text{splits into product of primes}\; p=\mathfrak{p}\bar{\mathfrak{p}}\; \text{in}\; R.\]
We fix two embeddings $\iota_\infty :\bar{\mathbb{Q}} \hookrightarrow \mathbb{C}$ and  $\iota_p : \bar{\mathbb{Q}} \hookrightarrow \mathbb{C}_p$ and write $c$ for both complex conjugation of  $\mathbb{C}$ and $\bar{\mathbb{Q}}$ induced by $\iota_\infty$. Then we can choose an embedding $\sigma : M \hookrightarrow \bar{\mathbb{Q}}$ such that the $p$-adic place induced by $\iota_p \circ \sigma$ is distinct from the one induced by $\iota_p \circ c \circ \sigma$. This choice $\Sigma = \{ \sigma \}$ is called $p$-ordinary CM type and its existence is equivalent to (ord). Let $G$ denote the algebraic group  $\mathrm{GL}(2)_{/\mathbb{Q}}$. Let $f$  be a normalized Hecke newform as above and let $\mathbf{f}$ be its corresponding adelic form on  $G(\mathbb{Q})\backslash G(\mathbb{A})$ with central character $\boldsymbol{\psi}:=\psi|\cdot|_{\mathbb{A}}^{-k}$ (see Section 6 of \cite{Br11a}  for definition). All reasonable adelic lifts of $f$ are equal up to twists by a power of the everywhere unramified character $|\mathrm{det}(g)|_{\mathbb{A}}$, and $\mathbf{f}^u(g) := \mathbf{f}|\boldsymbol{\psi}(\mathrm{det}(g))|^{-1/2}$ is a unique one which generates a unitary automorphic representation  $\pi_\mathbf{f}$. We further take the base-change  $\hat{\pi}_{\mathbf{f}}$ to $\mathrm{Res}_{M/\mathbb{Q}}G$. Pick an arithmetic Hecke character $\lambda$ of $M^{\times} \backslash M^{\times}_{\mathbb{A}}$ of $\infty$-type $(k,0)$ such that condition $\lambda|_{\mathbb{A}^\times} = \boldsymbol{\psi}^{-1}$ holds. Write $\lambda^-:=(\lambda \circ c)/|\lambda|$ for the unitary projection. Under this condition, the L-value $L(1/2,\hat{\pi}_\mathbf{f}\otimes \lambda^-)$, regarded as that of the Rankin--Selberg L-function associated to $\mathbf{f}$ and the theta series $\theta(\lambda^-)$ of $\lambda^-$, is critical in the sense of Deligne and central with respect to the functional equation.

Let $N=\prod_ll^{\nu(l)}$ be the prime factorization and denote by $N_{ns}:=\prod_{l \, \text{non-split}}l^{\nu(l)}$ its ``non-split'' part. Consider the order $R_{N_{ns}p^n}:=\mathbb{Z}+N_{ns}p^nR$ and let $\mathrm{Cl}_n^-:=\mathrm{Pic}(R_{N_{ns}p^n})$, that is, the group of $R_{N_{ns}p^n}$-projective fractional ideals modulo globally principal ideals. We define anticyclotomic class group modulo $N_{ns}p^\infty$, $\mathrm{Cl}_\infty^-:=\varprojlim_n\mathrm{Cl}_n^-$ for the projection $\pi_{m+n,n}:\mathrm{Cl}_{m+n}^-\to\mathrm{Cl}_n^-$ taking $\mathfrak{a}$ to $\mathfrak{a}R_{N_{ns}p^n}$. By the class field theory, the group $\mathrm{Cl}_\infty^-$ is isomorphic to the Galois group of the maximal ring class field of conductor $N_{ns}p^\infty$  of $M$. Let $W$ be the ring of Witt vectors with coefficients in the algebraic closure $\bar{\mathbb{F}}_p$ of the finite field of $p$ elements $\mathbb{F}_p$, regarded as a $p$-adically closed discrete valuation ring inside $p$-adic completion $\mathbb{C}_p$ of $\bar{\mathbb{Q}}_p$. We set $\mathcal{W}=\iota_p^{-1}(W)$ which is a strict henselization of $\mathbb{Z}_{(p)}=\mathbb{Q}\cap\mathbb{Z}_p$.

The $W$-valued $p$-adic measure $\mu_f$ in \cite{Br11a} is constructed on $\mathrm{Cl}_\infty^-$ so that its moments interpolate the special values of the Hecke eigen-cusp form $f$ at CM points on a Shimura variety  that underlies the central critical values  $L(\frac{1}{2},\hat{\pi}_{\mathbf{f}}\otimes \lambda^-)$. The Shimura variety  in question is the tower of modular curves $Sh$ classifying elliptic curves up to isogeny constructed by Shimura in \cite{Sh66} and reinterpreted by Deligne in \cite{De71}. These CM points $x =(E_x,\eta_x)\in Sh$ are associated to proper ideal classes of $R_{N_{ns}p^n}$ and carry elliptic curves $E_x$ with complex multiplication by $M$ having ordinary reduction over $\mathcal{W}$ and equipped with suitable level structures $\eta_x$ also defined over $\mathcal{W}$. 

Let $\Gamma:=1+\mathbf{p}\mathbb{Z}_p$, $\omega$ be the Teichm\"uller character, and write $\mathbf{p}$ for 4 or $p$ according to $p=2$ or not. Following recipe provided in Section \ref{sec:twovariables}, fix once and for all a finite order arithmetic Hecke character $\tilde{\psi}$ of $M$ such that $\tilde{\psi}|_{\mathbb{A}^\times}=\psi^{-1}$, as well as an arithmetic Hecke character  $\tilde{\omega}$ of $M$ of $\infty$-type $(1,0)$ such that $\tilde{\omega}|_{\mathbb{A}^\times}=\omega|\cdot|_{\mathbb{A}}$. We embedd $\mathbb{Z}_{\geq 2}$ into $\mathrm{Hom}_{cont}(\Gamma,W^\times)$ via $k\mapsto (\gamma \mapsto \gamma^k)$. Then our theorem states:

\begin{main}\label{main}
Let $\mathcal{F}=\{f_k\}_k$ be a Hida family of ordinary modular forms $f_k\in S_k^{\mathrm{ord}}(\Gamma_0(N_0p^r\mathbf{p}),\psi\omega^{-k})$, $p\nmid N$, $r\geq 0$. There exists a bounded $W$-valued $p$-adic measure $\mathrm{d}\mathcal{F}$ on $\mathrm{Cl}^-_\infty \times \Gamma$ whose Mazur--Mellin transform $\mathcal{L}(\mathcal{F};\cdot,\cdot)$ is a $p$-adic analytic Iwasawa function on the $p$-adic Lie group $\mathrm{Hom}_{cont}(\mathrm{Cl}^-_\infty\times \Gamma,W^\times)$ and such that whenever $\chi:M^{\times} \backslash M^{\times}_{\mathbb{A}}\to\mathbb{C}^\times$ is an anticyclotomic arithmetic Hecke character with $\chi(a_\infty)=a_\infty^{m(1-c)}$ for some $m\geq 0$ and of conductor $N_{ns}p^s$, where $s\geq r+ \mathrm{ord}_p(\mathbf{p})$ is an arbitrary integer, or of conductor $N_{ns}$,
we have 
\[
\left(\frac{\mathcal{L}(\mathcal{F};\widehat{\chi},k)}{\mathrm{\Omega}_p^{k+2m}}\right)^2=C(k,\tilde{\psi},\tilde{\omega},\chi,m)\frac{L(\frac{1}{2},\hat{\pi}_{\mathbf{f}_k}\otimes (\tilde{\psi}\tilde{\omega}^k\chi)^-)}{\left(\mathrm{\Omega}_\infty^{k+2m}\right)^2}
\]
for an explicit constant $C(k,\tilde{\psi},\tilde{\omega},\chi,m)$. Here $\mathrm{\Omega}_p\in W^\times$ and $\mathrm{\Omega}_\infty\in\mathbb{C}^\times$ are N\'eron periods of an elliptic curve of CM type $\Sigma$ defined over $\mathcal{W}$ (see Section \ref{sec:cmpts} for the definition), and  $\widehat{\chi}:M^{\times} \backslash M^{\times}_{\mathbb{A}}\to W^\times$ defined by $\widehat{\chi}(x)=\chi(x)x_p^{m(1-c)}$ is the $p$-adic avatar of $\chi$.
\end{main}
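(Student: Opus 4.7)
The plan is to upgrade the one-variable construction of \cite{Br11a} to a $\Lambda$-adic construction, where $\Lambda := W[[\Gamma]]$. Since $\Lambda$ is canonically the algebra of bounded $W$-valued measures on $\Gamma$, producing a $\Lambda$-valued measure on $\mathrm{Cl}_\infty^-$ tautologically yields a bounded $W$-valued measure on $\mathrm{Cl}_\infty^-\times \Gamma$; the second variable thus emerges essentially for free, once one runs the machinery of \cite{Br11a} with a $\Lambda$-adic cusp form in place of the fixed eigenform.

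By Hida's control theorem for ordinary $\Lambda$-adic cusp forms, the family $\mathcal{F}=\{f_k\}_k$ arises as a single $\Lambda$-adic ordinary newform $\mathbb{F}$ whose specialization at the weight-$k$ arithmetic point recovers $f_k$. The construction of $\mu_f$ in \cite{Br11a} depends on $f$ only through its Serre--Tate $t$-expansions at the tower of CM points $x_n=(E_{x_n},\eta_{x_n})$ associated to proper ideal classes of $R_{N_{ns}p^n}$, together with the action of $\mathrm{Cl}_\infty^-$ via Shimura's reciprocity law and the duality between formal $\widehat{\mathbb{G}}_m$-expansions and bounded $p$-adic measures. Each of these ingredients commutes with the $\Lambda$-adic structure of $\mathbb{F}$, so running the argument of \cite{Br11a} with $\mathbb{F}$ in place of $f$ produces a $\Lambda$-valued measure on $\mathrm{Cl}_\infty^-$, which is the desired $W$-valued measure $d\mathcal{F}$ on $\mathrm{Cl}_\infty^-\times\Gamma$; its specialization at each weight $k\geq 2$ is a normalized twist of the one-variable measure $\mu_{f_k}$. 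The interpolation formula at $(\widehat{\chi},k)$ then reduces to the main theorem of \cite{Br11a} applied to $f_k$ against $\lambda_k := \tilde{\psi}\tilde{\omega}^k\chi$: the recipe for $\tilde{\psi}$ and $\tilde{\omega}$ in Section~\ref{sec:twovariables} is arranged precisely so that $\lambda_k|_{\mathbb{A}^\times}=\boldsymbol{\psi}_k^{-1}$, where $\boldsymbol{\psi}_k=\psi\omega^{-k}|\cdot|_{\mathbb{A}}^{-k}$ is the central character of $\mathbf{f}_k^u$, matching the hypothesis required by \cite{Br11a}.

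The main obstacle is integrality, and correspondingly the boundedness of $d\mathcal{F}$: one must verify that evaluating the $\Lambda$-adic form $\mathbb{F}$ at a CM point in Serre--Tate coordinates, and then integrating against the reciprocity pairing on $\mathrm{Cl}_\infty^-$, lands in $\Lambda$ itself rather than in its fraction field. This is a $\Lambda$-adic reflection of Hida's integrality for ordinary $\Lambda$-adic cusp forms together with the fact that Serre--Tate coordinates on the deformation space of an ordinary CM elliptic curve over $\mathcal{W}$ are integral over $W$. A related subtlety is the matching of periods: the exponent $k+2m$ of $\Omega_p$ must flow naturally from the Serre--Tate uniformization of $E_{x_n}$ used to trivialize $\omega_{E_{x_n}}$ for weight-$k$ modular forms, and must match the corresponding exponent of $\Omega_\infty$ on the archimedean side through the standard Katz comparison. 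Once these are in place, the explicit constant $C(k,\tilde{\psi},\tilde{\omega},\chi,m)$ is obtained by tracking the analogous constant from \cite{Br11a} through the specialization of $\mathbb{F}$ at weight $k$, its $k$-dependence reducing to that of Euler factors at the ramified primes of $\mathcal{F}$ together with the correction at the split prime $\mathfrak{p}$ coming from $\lambda_k^-$.
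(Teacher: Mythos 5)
Your proposal follows essentially the same route as the paper: view the Hida family as a $\Lambda$-adic cusp form giving a bounded measure on $\Gamma$ valued in $p$-adic modular forms, evaluate via the Katz operator and Serre--Tate coordinates at the CM points $x(\mathfrak{A}_j)$ to produce the $\mathrm{Cl}^-_\infty$-variable, twist by the family of critical characters $\lambda_k=\tilde{\psi}\tilde{\omega}^k$, and reduce the interpolation formula at each weight $k$ to the one-variable main theorem of \cite{Br11a}. The only point you gloss over is that the twisting factor $\widehat{\lambda}_k(\mathfrak{A}_j^{-1})$ depends on $k$ and must itself be interpolated $\Lambda$-adically, which the paper handles by the elementary operation $\mu\mapsto\mu|\alpha$ on measures on $\Gamma$; this is a routine step, so your argument is in substance the paper's proof.
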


If we denote by $\Lambda := W[[\Gamma]]$ the Iwasawa algebra, then $\mathcal{L}(\mathcal{F};\cdot,\cdot)$ gives rise to an Iwasawa function on $\mathrm{Spec}(W[[ \mathrm{Cl}^-_\infty]]\times \Lambda)$ in the sense that it is a global section of the structure sheaf of  $\mathrm{Spec}(W[[\mathrm{Cl}^-_\infty]]\times \Lambda)$,  i.e. an element in $W[[\mathrm{Cl}^-_\infty]]\times\Lambda$. 

In Section \ref{sec:i-adic} we present  the construction in the case of a ``full'' ordinary family of modular forms living on an irreducible component $\mathrm{Spec}(\mathbb{I})\subset\mathrm{Spec}(\mathbf{h}^{\mathrm{ord}})$, for the unique ``big'' ordinary Hecke algebra $\mathbf{h}^{\mathrm{ord}}$ constructed in \cite{Hi86a} and \cite{Hi86b}, such that  $\mathrm{Spec}(\mathbb{I})$ is a finite flat irreducible covering of $\mathrm{Spec}(\Lambda)$. 
Let $\tilde{\mathbb{I}}$ be the integral closure of $\Lambda$ in the quotient field of $\mathbb{I}$. In  Section \ref{sec:i-adic} we construct an Iwasawa function on $\mathrm{Spec}(W[[ \mathrm{Cl}^-_\infty]]\times \tilde{\mathbb{I}})$ in the sense that it is a global section of the structure sheaf of  $\mathrm{Spec}(W[[\mathrm{Cl}^-_\infty]]\times \tilde{\mathbb{I}})$,  i.e. an element in $W[[\mathrm{Cl}^-_\infty]]\times \tilde{\mathbb{I}}$. 

\subsection*{Acknowledgment} This research work was started during my graduate studies under Prof. Haruzo Hida at UCLA and completed now. I would like to express my gratitude to Prof. Haruzo Hida for his generous insight, support and guidance. 

\tableofcontents

\section{Algebro-geometric and $p$-adic modular forms}

In order to set a notation, we briefly recall basics of the algebro-geometric theory of modular forms.

\subsection{Algebro-geometric modular forms}\label{sec:agmf} 
Fix a positive integer $N$ and  a base $\mathbb{Z}[\frac{1}{N}]$-algebra $B$. The modular curve $\mathfrak{M}(N)$ of level $N$ classifies pairs $(E,i_N)_{/S}$, for a $B$-scheme $S$, formed by
\begin{itemize}
\item An elliptic curve $E$ over $S$, that is, a proper smooth morphism $\pi : E\to S$ whose geometric fibers are connected curves of genus 1, together with a section $\textbf{0}:S\to E$.
\item An embedding of finite flat group schemes $i_N:\mu_N\hookrightarrow E[N]$, called level $\Gamma_1(N)$-structure,  where $E[N]$ is a scheme-theoretic kernel of multiplication by $N$ map -- it is a finite flat abelian group scheme over $S$ of rank $N^2$.
\end{itemize}
In other words, $\mathfrak{M}(N)$ is a coarse moduli scheme, and a fine moduli scheme if $N>3$, of the following functor from the category of $B$-schemes to the category \textit{SETS}
\[ \mathcal{P}_N(S)=[(E,i_N)_{/S}]_{/\cong}\,,\]
where $[\quad]_{/\cong}$ denotes the set of isomorphism classes of the objects inside the brackets. If $\omega$ is a basis of $\pi_*(\Omega_{E/S})$, that is, a nowhere vanishing section of $\Omega_{E/S}$, one can further consider a functor classifying triples $(E,i_N,\omega)_{/S}$:
\[ \mathcal{Q}_N(S)=[(E,i_N,\omega)_{/S}]_{/\cong}\; .\]
Since $a\in \mathbb{G}_m(S)$ acts on $\mathcal{Q}(S)$ via $(E,i_N,\omega)\mapsto (E,i_N,a\omega)$, $\mathcal{Q}_N$ is a $\mathbb{G}_m$-torsor over $\mathcal{P}_N$. This furnishes representability of $\mathcal{Q}$ by a $B$-scheme $\mathcal{M}(N)$ affine over $\mathfrak{M}(N)_{/B}$.

Fix a positive integer $k$ and a continuous character $\psi : (\mathbb{Z}/N\mathbb{Z})^\times\rightarrow B^\times$. Denote by $\zeta_N$ the canonical generator of $\mu_N$. A $B$-integral holomorphic modular form of weight $k$, level $\Gamma_0(N)$ and nebentypus $\psi$ is a function of isomorphism classes of $(E,i_N,\omega)_{/A}$, defined over $B$-algebra $A$, satisfying the following conditions:
\begin{itemize}
 \item[(G0)] $f((E,i_N,\omega)_{/A})\in A$ if $(E,i_N,\omega)$ is defined over $A$;
 \item[(G1)] If $\varrho:A\rightarrow A'$ is a morphism of $B$-algebras then $f((E,i_N,\omega)_{/A}\otimes_{B} A')=\varrho(f((E,i_N,\omega)_{/A}))$;
 \item[(G2)] $f((E,i_N,a\omega)_{/A})=a^{-k}f((E,i_N,\omega)_{/A})$ for $a\in A^\times=\mathbb{G}_m(A)$;
 \item[(G3)] $f((E, i_N\circ b,\omega)_{/A})=\psi(b)f((E,i_N,\omega)_{/A})$ for $b \in (\mathbb{Z}/N\mathbb{Z})^\times$, where $b$ acts on $i_N$  by the canonical action of $\mathbb{Z}/N\mathbb{Z}$ on the finite flat group scheme $\mu_N$;
 \item[(G4)] For the Tate curve $Tate(q^N)$ over $B\otimes_{\mathbb{Z}}\mathbb{Z}((q))$ viewed as algebraization of formal quotient        
   $\widehat{\mathbb{G}}_m/q^{N\mathbb{Z}}$, its canonical differential $\omega_{Tate}^{can}$ deduced from $\frac{\mathrm{d}t}{t}$  on $\widehat{\mathbb{G}}_m$,  the canonical level $\Gamma_1(N)$-structure $i_{Tate,N}^{can}$ coming from the canonical image of the point $\zeta_N$ from  $\widehat{\mathbb{G}}_m$, and all $\alpha \in \mathrm{Aut}(Tate(q^N)[N]) \cong G(\mathbb{Z}/N\mathbb{Z})$, we have
   \[ f((Tate(q^N),\alpha\circ i_{Tate,N}^{can},\omega_{Tate}^{can}))\in B\otimes_{\mathbb{Z}}\mathbb{Z}[[q]]\; .\] 
\end{itemize}
The space of $B$-integral holomorphic modular forms of weight $k$, level $\Gamma_0(N)$ and nebentypus $\psi$ is a $B$-module of finite type and we denote it by $G_k(N,\psi;B)$.

\subsection{$p$-adic modular forms}
\label{p-adicmf}
Fix a prime number $p$ that does not divide $N$. Let $B$ be an algebra that is complete and separated in its $p$-adic topology; such algebras are called $p$-adic algebras. For an elliptic curve $E_{/S}$ we consider the Barsotti--Tate group $E[p^\infty]=\underrightarrow{\lim}_nE[p^n]$, for finite flat group schemes $E[p^n]$ equipped with closed immersions $E[p^n]\hookrightarrow E[p^m]$ for $m>n$, and the multiplication $[p^{m-n}]:E[p^m]\to E[p^n]$ which is an epimorphism in the category of finite flat group schemes. Considering a morphism of ind-group schemes $i_p:\mu_{p^\infty}\hookrightarrow E[p^\infty]$ we have a functor
\[ \widehat{\mathcal{P}}_N(A)=[(E,i_N,i_p)_{/A}]_{/\cong} \]
defined over the category of $p$-adic $B$-algebras $A$. By a theorem of Deligne--Ribet and Katz, this functor is pro-represented by the formal completion $\widehat{\mathfrak{M}}(Np^\infty)$ of $\mathfrak{M}(N)$ along the ordinary locus of its modulo $p$ fiber. A holomorphic $p$-adic modular form over $B$ is a function of isomorphism classes of $(E,i_N,i_p)_{/A}$, defined over $p$-adic $B$-algebra $A$, satisfying the following conditions:
\begin{itemize}
	\item[(P0)] $f((E,i_N,i_p)_{/A})\in A$ if $(E,i_N,i_p)$ is defined over $A$;
	\item[(P1)] If $\varrho:A\rightarrow A'$ is a $p$-adically continuous morphism of $B$-algebras then $f((E,i_N,i_p)_{/A}\otimes_{B} A')=\varrho(f((E,i_N,i_p)_{/A}))$;
	\item[(P2)]  For the Tate curve $Tate(q^N)$ over $\widehat{B((q))}$, which is a $p$-adic completion of $B((q))$, the canonical $p^\infty$-structure $i_{Tate,p}^{can}$, the canonical level $\Gamma_1(N)$-structure $i_{Tate,N}^{can}$, all $p$-adic units $z\in \mathbb{Z}_p^\times$ and all $\alpha \in \mathrm{Aut}(Tate(q^N)[N]) \cong  G(\mathbb{Z}/N\mathbb{Z})$, we have 
\[ f((Tate(q^N),\alpha \circ i_{Tate,N}^{can},z\circ i_{Tate,p}^{can}))\in B[[q]] \;.\] 
\end{itemize}
We denote the space of $p$-adic holomorphic modular forms over $B$ by $V(N;B)$. 

The fundamental $q$-expansion principle holds for both algebro-geometric and $p$-adic modular forms (see Section 3.2 of \cite{Br11a} for the statement under our notation).

The $p^\infty$-level structure $i_p:\mu_{p^\infty}\hookrightarrow E[p^\infty]$ over $A$ induces an isomorphism of formal groups $\hat{i}_p:\widehat{\mathbb{G}}_m\cong\widehat{E}$ over $A$ called trivialization of $E$, where $\widehat{E}$ stands for the formal completion of $E$ along its zero-section. Using the trivialization $\hat{i}_p$ we can push forward the canonical differential $\frac{\mathrm{d}t}{t}$ on $\widehat{\mathbb{G}}_m$ to obtain an invariant differential $\omega_p:=\hat{i}_{p,*}(\frac{\mathrm{d}t}{t})$ on $\widehat{E}$ which then extends to an invariant differential on $E$. Thus for $f\in G_k(N,\psi;B)$ we can define
\[f((E,i_N,i_p)):=f((E,i_N,\omega_p))\]
and thus regard an algebro-geometric holomorphic modular form as a $p$-adic one. The $q$-expansion principle makes $G_k(N,\psi;B)\hookrightarrow V(N;B)$ an injection preserving $q$-expansions.

\section{Modular curves}
\subsection{Elliptic curves with complex multiplication}\label{sec:eccm}
Each $\mathbb{Z}$-lattice $\mathfrak{a}$ in $M$ is actually a proper ideal of the $\mathbb{Z}$-order $R(\mathfrak{a}):=\{\alpha\in R\mid \alpha\mathfrak{a}\subset\mathfrak{a}\}$  of $M$. On the other hand, every $\mathbb{Z}$-order $O$ of $M$ is of the form $O=\mathbb{Z}+cR$ for a rational integer $c$ called the conductor. The following are equivalent (see Proposition 4.11 and (5.4.2) in \cite{IAT} and Theorem 11.3 of \cite{CRT})
\begin{itemize}
\item[(1)] $\mathfrak{a}$ is $O$-projective fractional ideal
\item[(2)] $\mathfrak{a}$ is locally principal, i.e. \frenchspacing the localization at each prime is principal
\item[(3)] $\mathfrak{a}$ is a proper $O$-ideal, i.e. \frenchspacing $O=R(\mathfrak{a})$.  
\end{itemize}
This allows us to define the class group $\mathrm{Cl}_M^-(O):=\mathrm{Pic}(O)$  as the group of $O$-projective fractional ideals modulo the globally principal ideals. It is a finite group called the ring class group of conductor $c$, where $c$ stands for the conductor of $O$. 

In this paper we are concerned with orders $R_{cp^n}:=\mathbb{Z}+cp^nR$ and their ring class groups $\mathrm{Cl}^-_n:=\mathrm{Pic}(R_{cp^n})$ when $n\geq 0$, where $c$ is a fixed choice of an integer prime to $p$ that will always be clear from the context. By the class field theory, $\mathrm{Cl}^-_n$ is the Galois group $\mathrm{Gal}(H_{cp^n}/M)$ of the ring class field $H_{cp^n}$ of $M$ of conductor $cp^n$. The adelic interpretation of $\mathrm{Cl}^-_n$ is given by
\[ \mathrm{Cl}^-_n = M^\times \left\backslash (M^{(\infty)}_{\mathbb{A}})^{\times}\right/(\mathbb{A}^{(\infty)})^\times \widehat{R}_{cp^n}^{\times} \]
where $\widehat{R}_{cp^n}=R_{cp^n} \otimes_\mathbb{Z}\widehat{\mathbb{Z}}$. The anticyclotomic class group modulo $cp^\infty$ is defined as $\mathrm{Cl}^-_\infty:=\varprojlim_n\mathrm{Cl}^-_n$ for the projection $\pi_{m+n,n}:\mathrm{Cl}^-_{m+n}\to\mathrm{Cl}^-_n$ taking $\mathfrak{a}$ to $\mathfrak{a}R_{cp^n}$. It is isomorphic to the Galois group of the maximal ring class field $H_{cp^\infty}=\bigcup_n H_{cp^n}$ of conductor $cp^\infty$ of $M$.

Take a $\mathbb{Z}$-lattice $\mathfrak{a}\subset M$ having $p$-adic completion $\mathfrak{a}_p=\mathfrak{a}\otimes_\mathbb{Z}\mathbb{Z}_p$ identical to $R\otimes_\mathbb{Z}\mathbb{Z}_p$. Starting from a complex torus $\X{\mathfrak{a}}(\mathbb{C})=\mathbb{C}/\mathfrak{a}$, by the main theorem of complex multiplication (\cite{ACM} 18.6), we  algebraize it to an elliptic curve having complex multiplication by $M$ and defined over a number field. Then applying the Serre--Tate's criterion of good reduction (\cite{SeTa}) we deduce that \X{\mathfrak{a}} is actually defined over the field of fractions $\mathcal{K}$ of $\mathcal{W}$ and extends to an elliptic curve over $\mathcal{W}$ still denoted by \Xw{\mathfrak{a}}{\mathcal{W}}. All endomorphisms of \Xw{\mathfrak{a}}{\mathcal{W}} are defined over $\mathcal{W}$ and its special fiber $\Xw{\mathfrak{a}}{\bar{\mathbb{F}}_p}=\Xw{\mathfrak{a}}{\mathcal{W}}\otimes \bar{\mathbb{F}}_p$ is ordinary by our assumption that $p=\mathfrak{p}\bar{\mathfrak{p}}$ splits in $M$.

Let $T(\X{\mathfrak{a}})=\varprojlim_N\X{\mathfrak{a}}[N](\bar{\mathbb{Q}})$ be the Tate module of \X{\mathfrak{a}}. A choice of a $\widehat{\mathbb{Z}}$-basis $(w_1,w_2)$ of $\widehat{\mathfrak{a}}=\mathfrak{a}\otimes_{\mathbb{Z}}\widehat{\mathbb{Z}}$ gives rise to a level $N$-structure $\eta_N(\mathfrak{a}):(\mathbb{Z}/N\mathbb{Z})^2\cong \X{\mathfrak{a}}[N]$ given by $\eta_N(\mathfrak{a})(x,y)=\frac{xw_1+yw_2}{N}\in \X{\mathfrak{a}}[N]$. After taking their inverse limit and tensoring with $\mathbb{A}^{(\infty)}$, we get a level structure
\[ \eta(\mathfrak{a}) = \varprojlim_N\eta_N(\mathfrak{a}):(\mathbb{A}^{(\infty)})^2\cong T(\X{\mathfrak{a}})\otimes_{\widehat{\mathbb{Z}}}\mathbb{A}^{(\infty)}=:V(\X{\mathfrak{a}}) \; . \] 
We can remove the $p$-part of $\eta(\mathfrak{a})$ and define a level structure $\eta^{(p)}(\mathfrak{a})$ that conveys information about all prime-to-$p$ torsion in \X{\mathfrak{a}}:
\[ \eta^{(p)}(\mathfrak{a}):(\mathbb{A}^{(p\infty)})^2\cong
T(\X{\mathfrak{a}})\otimes_{\widehat{\mathbb{Z}}}\mathbb{A}^{(p\infty)}=:V^{(p)}(\X{\mathfrak{a}}) \; . \]
Note that prime-to-$p$ torsion in \Xw{\mathfrak{a}}{\mathcal{W}} is unramified at $p$, and $\X{\mathfrak{a}}[N]$ for $p\nmid N$ is \'etale whence constant over $\mathcal{W}$, so the level structure $\eta^{(p)}(\mathfrak{a})$ is still defined over $\mathcal{W}$ (\cite{ACM} 21.1 and \cite{SeTa}). 

Since \Xw{\mathfrak{a}}{\mathcal{W}} has ordinary reduction over $\mathcal{W}$, we can identify $\mu_{p^\infty}$ with the connected component $\X{\mathfrak{a}}[p^\infty]^\circ \cong \X{\mathfrak{a}}[\mathfrak{p}^\infty]$, obtaining the ordinary part of level structure at $p$, namely $\ord{\mathfrak{a}}:\mu_{p^\infty}\hookrightarrow\X{\mathfrak{a}}[p^\infty]$. The \'etale part of level structure at $p$, namely $\et{\mathfrak{a}}:\mathbb{Q}_p/\mathbb{Z}_p\cong\X{\mathfrak{a}}[p^\infty]^{\acute{e}t}\cong \X{\mathfrak{a}}[\bar{\mathfrak{p}}^\infty]$ over $\mathcal{W}$ is then furnished by the Cartier duality.
Thus, we constructed a triple
\[\xw{\mathfrak{a}}{\mathcal{W}}\,.\]

\subsection{Definitions and basic facts}\label{sec:dbf} 

For the affine algebraic group $G=\mathrm{GL}(2)_{/\mathbb{Q}}$ let $\mathbb{S}=\mathrm{Res}_{\mathbb{C}/\mathbb{R}}\mathbb{G}_m$ and denote by $h_{\mathbf{0}}:\mathbb{S}\to G_{/\mathbb{R}}$ the homomorphism of real algebraic groups sending $a+b\mathbf{i}$ to the matrix $\bigl(\begin{smallmatrix} a & -b \\ b & a \end{smallmatrix} \bigr)$. The symmetric domain $\mathfrak{X}$ for $G(\mathbb{R})$ can be identified with conjugacy class of $h_{\mathbf{0}}$ under $G(\mathbb{R})$ and is isomorphic to the union $\mathfrak{H}\sqcup\mathfrak{H}^{c}$ of complex upper and lower half planes via $g\circ h_{\mathbf{0}}\mapsto g\circ \mathbf{i}$. Here the left actions of $G(\mathbb{R})$ on $\mathfrak{X}$ and $\mathfrak{H}\sqcup\mathfrak{H}^{c}$ are by conjugation and $z\mapsto\frac{az+b}{cz+d}$, for $g= \bigl(\begin{smallmatrix} a & b \\ c & d \end{smallmatrix} \bigr)$, respectively. The pair $(G,\mathfrak{X})$ satisfies Deligne's axioms for having its Shimura variety $Sh$ (\cite{De71} and \cite{De79} 2.1.1). $Sh$ was first  constructed by Shimura in \cite{Sh66} but reinterpreted by Deligne in \cite{De71} 4.16-4.22 as a moduli of elliptic curves up to isogenies. 
More precisely, Deligne realized $Sh$ as a quasi-projective smooth $\mathbb{Q}$-scheme representing the moduli functor $\mathcal{F}^{\mathbb{Q}}$ from the category of abelian $\mathbb{Q}$-schemes to \textit{SETS}:
\[ \mathcal{F}^{\mathbb{Q}}(S)=\{(E,\eta)_{/S}\}_{/\approx}\,, \] 
where $\eta:(\mathbb{A}^{(\infty)})^2\cong T(E)\otimes_{\widehat{\mathbb{Z}}}\mathbb{A}^{(\infty)}=:V(E)$ is a $\mathbb{Z}$-linear isomorphism and two pairs $(E,\eta)_{/S}$ and $(E',\eta')_{/S}$ are isomorphic up to an isogeny, which we write $(E,\eta)_{/S}\approx (E',\eta')_{/S}$, if there exists an isogeny $\phi:E_{/S}\to E'_{/S}$ such that $\phi\circ\eta=\eta'$.

An important point in Deligne's treatment of $Sh_{/\mathbb{Q}}$ is that instead of the functor $\mathcal{F}^{\mathbb{Q}}$ one can consider the isomorphic functor $\widetilde{\mathcal{F}}^{\mathbb{Q}}$ from the category of abelian $\mathbb{Q}$-schemes to \textit{SETS}:
\[
\widetilde{\mathcal{F}}^{\mathbb{Q}}(S)= \{(E',\eta)_{/S}|\, \exists E\in \mathcal{F}^{\mathbb{Q}}(S)\, :\, E'_{/S}\approx E_{/S}\, ,\, \eta(\widehat{\mathbb{Z}}^2)=T(E')\}_{/\cong} \,,
\] 
and $\cong$ is not just induced from an isogeny, but rather an isomorphism of elliptic curves. Imposing the extra condition $\eta(\widehat{\mathbb{Z}}^2)=T(E')$ is compensated by tightening equivalence from ``isogenies'' to ``isomorphisms'' (see Section 4 of \cite{Br11a} for details).

The pairs $(E,\eta^{(p)})_{/S}$, for  a $\mathbb{Z}_{(p)}$-scheme $S$, consisting of an elliptic curve $E$ over $S$ and a $\mathbb{Z}$-linear isomorphism $\eta^{(p)}:(\mathbb{A}^{(p\infty)})^2\cong T(E)\otimes_{\widehat{\mathbb{Z}}}\mathbb{A}^{(p\infty)}=:V^{(p)}(E)$, are classified up to isogenies of degree prime to $p$ by a $p$-integral model $Sh_{/\mathbb{Z}_{(p)}}^{(p)}$  of  $Sh/G(\mathbb{Z}_p)$ (\cite{Ko}). By its construction, $Sh^{(p)}$ is a smooth $\mathbb{Z}_{(p)}$-scheme representing the moduli functor $\mathcal{F}^{(p)}$ from the category of  $\mathbb{Z}_{(p)}$-schemes to \textit{SETS}
\begin{equation} \label{kott1}
\mathcal{F}^{(p)}(S)=\{(X,\eta^{(p)})_{/S}\}_{/\approx}\,, 
\end{equation} 
and two pairs $(X,\eta^{(p)})_{/S}$ and $(X',\eta'^{(p)})_{/S}$ are isomorphic up to a prime-to-$p$ isogeny, which we write $(X,\eta^{(p)})_{/S}\approx (X',\eta'^{(p)})_{/S}$, if there exists an isogeny $\phi:X_{/S}\to X'_{/S}$ of degree prime to $p$ such that $\phi\circ\eta=\eta'$. By the same token, we could also tighten the equivalence from ``isogenies'' to ``isomorphisms'' and consider the isomorphic functor $\widetilde{\mathcal{F}}^{(p)}$ from the category of $\mathbb{Z}_{(p)}$-schemes to \textit{SETS}:
\begin{equation} \label{kott2}
\widetilde{\mathcal{F}}^{(p)}(S)= \{(E',\eta)_{/S}|\, \exists E\in \mathcal{F}^{(p)}(S)\, :\, E'_{/S}\approx E_{/S}\, ,\, \eta^{(p)}(\widehat{\mathbb{Z}}^2)=T^{(p)}(E')\}_{/\cong}\,.
\end{equation} 

The pair $x(\mathfrak{a})= \xunrw{\mathfrak{a}}{\mathcal{W}}$ for a $\mathbb{Z}$-lattice $\mathfrak{a}$ with $\mathfrak{a}\otimes_\mathbb{Z}\mathbb{Z}_p =R\otimes_\mathbb{Z}\mathbb{Z}_p$, constructed as above, gives rise to a $\mathcal{W}$-point on $Sh^{(p)}$ to which we refer as a CM point.

$Sh^{(p)}$ has a $G(\mathbb{A}^{(\infty)})$-action where each adele $g\in G(\mathbb{A}^{(\infty)})$ acts on a level structure $\eta^{(p)}$ by $\eta^{(p)}\mapsto\eta\circ g^{(p\infty)}$. A sheaf theoretic coset $\bar{\eta}^{(p)}=\eta^{(p)} K$, for an open compact subgroup $K\subset G(\mathbb{A}^{(\infty)})$ maximal at $p$ (i.e. $K_p=G(\mathbb{Z}_p)$), is called a level $K$-structure. The quotient $Sh^{(p)}_K=Sh^{(p)}/K$ represents the quotient functor
\[\mathcal{F}^{(p)}_K(S)=\{(E,\bar{\eta}^{(p)})_{/S}\}_{/\approx} \,, \] 
and $Sh^{(p)}=\varprojlim_K Sh^{(p)}_K$ when $K=G(\mathbb{Z}_p)\times K^{(p)}$ and $K^{(p)}$ running over open compact subgroups of $G(\mathbb{A}^{(p\infty)})$. 
When $K$ is chosen to be $\widehat{\Gamma}(N)$ for $p\nmid N$ (\cite{PAF} Section 4.2.1), $Sh_K$ is isomorphic to a, fine (when $N>3$) or coarse, moduli scheme $\mathfrak{M}(\Gamma(N))_{/\mathbb{Z}[1/N]}$ of level $\Gamma(N)$ (the principal congruence subgroup) representing the following functor
from the category of $\mathbb{Z}[1/N]$-schemes to the category \textit{SETS}
\[ \mathcal{P}_{\Gamma(N)}(S)=[(E,\phi_N:(\mathbb{Z}/N\mathbb{Z})\cong E[N])_{/S}]_{/\cong}\,,\]
and hence
\[Sh^{(p)}_{/\mathcal{W}}\cong\varprojlim_{p\nmid N} \mathfrak{M}(\Gamma(N))_{/\mathcal{W}} \,. \]
Then
\[ \mathfrak{M}(\Gamma(N))_{/\mathbb{Z}[1/N,\mu_N]}=\bigsqcup_{\zeta} \mathfrak{M}(\Gamma(N),\zeta) \,,\] 
where the union is taken over all primitive $N$-th roots of unity $\zeta$ and $\mathfrak{M}(\Gamma(N),\zeta)$ is a modular curve of level $\Gamma(N)$ representing the following functor from the category of $\mathbb{Z}[1/N,\mu_N]$-schemes to the category \textit{SETS}
\[ \mathcal{P}_{\Gamma(N), \zeta}(S)=[(E,\phi_N:(\mathbb{Z}/N\mathbb{Z})\cong E[N])_{/S}\mid \langle \phi_N(1,0),\phi_N(0,1)\rangle=\zeta]_{/\cong}\,,\]
with $\langle \cdot,\cdot\rangle$ denoting the Weil pairing, and with $\mathfrak{M}(\Gamma(N),\zeta)(\mathbb{C})=\Gamma(N)\backslash \mathfrak{H}$. 

The complex points of  $Sh$ have the following expression
\[ Sh(\mathbb{C})= G(\mathbb{Q})\left\backslash \left( \mathfrak{X}\times G(\mathbb{A}^{(\infty)})\right) \right/ Z(\mathbb{Q})\,,\]
where $Z$ denotes the center of $G$ and the action is given by $\gamma(z,g)u=(\gamma(z),\gamma gu)$ for $\gamma \in G(\mathbb{Q})$ and $u\in Z(\mathbb{Q})$ (\cite{De79} Proposition 2.1.10 and \cite{Mi} page 324 and Lemma 10.1). We write $[z,g]\in Sh(\mathbb{C})$ for the image of $(z,g)\in \mathfrak{X}\times G(\mathbb{A}^{(\infty)})$. 

\subsection{CM points} \label{sec:cmpts}
A point $x=[z,g]\in Sh(\mathbb{C})$ where $z$ generates an imaginary quadratic field $M_x=\mathbb{Q}(z)$ is called a CM point. In this paper we devote our attention to points $x=[z,g]$ for which $M_x=\mathbb{Q}(z)$ is actually our fixed imaginary quadratic field $M$. The corresponding elliptic curve $E_x$ from $x=(E_x,\eta_x)$ has a CM type $(M,\Sigma)$.

We define a regular representation $\rho=\rho_{z}:M^\times \hookrightarrow G(\mathbb{Q})$ by $\begin{pmatrix} \alpha z \\ \alpha \end{pmatrix}=\rho_{z}(\alpha)\begin{pmatrix} z \\ 1 \end{pmatrix}$.
Tensoring with $\mathbb{A}^{(\infty)}$, we may regard $\rho_{z}$ as a representation $\hat{\rho}_{z}:(M^{(\infty)}_{\mathbb{A}})^{\times}\hookrightarrow G(\mathbb{A}^{(\infty)})$, and further conjugating by $g$ we get $\hat{\rho}_{x}:(M^{(\infty)}_{\mathbb{A}})^{\times}\hookrightarrow G(\mathbb{A}^{(\infty)})$ given by $\hat{\rho}_{x}=g^{-1}\hat{\rho}_{z}(\alpha)g$. To each point $(E,\eta)\in Sh$ we can associate a lattice $\widehat{L}=\eta^{-1}(T(E))\subset (\mathbb{A}^{(\infty)})^2$. In the view of a basis $w$ of $\widehat{L}$, the $G(\mathbb{A}^{(\infty)})$-action on $Sh$ given by $(E,\eta)\mapsto(E,\eta\circ g)$ is the matrix multiplication $w^\intercal\mapsto g^{-1}w^\intercal$ because $(\eta\circ g)^{-1}(T(E))=g^{-1}\eta^{-1}(T(E))=g^{-1}\widehat{L}$, where $\intercal$ stands for the transpose. The fiber $E_x$ at $x\in Sh(\mathbb{C})$ of the universal elliptic curve over $Sh_{/\mathbb{Q}}$ has complex multiplication by an order of $M$, that is, under the action of $\widehat{R}$ via $\hat{\rho}_{x}$, $g^{-1}\widehat{L}\cap\mathbb{Q}^2$ is identified with a fractional ideal of $M$ prime to $p$.

The CM points associated to proper $R_{cp^n}$-ideals $\mathfrak{a}$ prime to $p$ are built in three stages. The full account of this construction is given in Section 5.1 of \cite{Br11a}, we refer the reader there for details beyond the following brief exposition. 

First, we equip \Xw{R}{\mathcal{W}} with appropriate level structures and a fixed choice of an invariant differential $\omega(R)$ on \Xw{R}{\mathcal{W}} so that $H^0(\X{R},\Omega_{\Xw{R}{\mathcal{W}}})=\mathcal{W}\omega(R)$.  Any choice of a $\widehat{\mathbb{Z}}$-basis $(w_1,w_2)$ of $\widehat{R}=R\otimes_{\mathbb{Z}}\widehat{\mathbb{Z}}$ gives rise to a level structure $\eta^{(p)}(R):(\mathbb{A}^{(p\infty)})^2\cong V^{(p)}(\X{R})$ defined over $\mathcal{W}$, as explained in Section \ref{sec:eccm}. In particular, fixing a choice of $z_1\in R$ such that $R=\mathbb{Z}+\mathbb{Z}z_1$ we get a $\widehat{\mathbb{Z}}$-basis $(w_1,w_2)$ of $\widehat{R}$. Since \Xw{R}{\mathcal{W}} has ordinary reduction over $\mathcal{W}$, we can identify $\mu_{p^\infty}$ with the connected component $\X{R}[p^\infty]^\circ \cong \X{R}[\mathfrak{p}^\infty]$, obtaining the ordinary part of a level structure at $p$, namely $\ord{R}:\mu_{p^\infty}\hookrightarrow\X{R}[p^\infty]$. The \'etale part of a level structure at $p$, namely $\et{R}:\mathbb{Q}_p/\mathbb{Z}_p\cong\X{R}[p^\infty]^{\acute{e}t}\cong \X{R}[\bar{\mathfrak{p}}^\infty]$ over $\mathcal{W}$ is then furnished by the Cartier duality. 

Second, for every proper $R_c$-ideal $\mathfrak{A}$ such that $\mathfrak{A}_p=R\otimes_\mathbb{Z}\mathbb{Z}_p$, we then induce from \Xw{R}{\mathcal{W}}  corresponding level structures and an invariant differential on \Xw{\mathfrak{A}}{\mathcal{W}}.  Regarding $c$ as an element of $\mathbb{A}^\times$, $(cw_1,w_2)$ is a basis of $\widehat{R}_c$ over $\widehat{\mathbb{Z}}$ giving rise to a level structure $\eta^{(p)}(R_c):(\mathbb{A}^{(p\infty)})^2\cong V^{(p)}(\X{R_c})$. Choosing a complete set of representatives $\{a_1,\ldots,a_{H^-}\}\subset M^{\times}_{\mathbb{A}}$ so that $M^{\times}_{\mathbb{A}}=\bigsqcup_{j=1}^{H^-}M^\times a_j\widehat{R}_c^\times M_\infty^\times$, we have $\widehat{\mathfrak{A}}=\alpha a_j\widehat{R}_c$ for some $\alpha\in M^\times$ and $1\leq j\leq H^-$, and we can define $\eta^{(p)}(\mathfrak{A})=\alpha^{-1} a_j^{-1} \eta^{(p)}(R_c)$.
Since $\mathfrak{A}_p=R\otimes_\mathbb{Z}\mathbb{Z}_p$, \X{R\cap\mathfrak{A}} is an \'etale covering of both \X{\mathfrak{A}} and \X{R}, and we get $\ord{\mathfrak{A}}:\mu_{p^\infty}\cong \X{\mathfrak{A}}[p^\infty]^\circ$ and $\et{\mathfrak{A}}:\mathbb{Q}_p/\mathbb{Z}_p\cong\X{\mathfrak{A}}[p^\infty]^{\acute{e}t}$ first by pulling back \ord{R} and \et{R} from  \X{R} to \X{R\cap\mathfrak{A}} and then by push-forward from \X{R\cap\mathfrak{A}} to \X{\mathfrak{A}}. Thus we get a CM point \[x(\mathfrak{A})=\xw{\mathfrak{A}}{\mathcal{W}}\] 
on $Sh$ associated to a proper $R_c$-ideal $\mathfrak{A}$. Note that $\omega(R)$ induces a differential $\omega(\mathfrak{A})$ on \X{\mathfrak{A}} first by pulling back $\omega(R)$ from  \X{R} to \X{R\cap\mathfrak{A}} and then by pull-back inverse from \X{R\cap\mathfrak{A}} to \X{\mathfrak{A}}. The projection $\pi_1:\X{R\cap\mathfrak{A}}\twoheadrightarrow\X{\mathfrak{A}}$ is \'etale so the pull-back inverse $(\pi_1^*)^{-1}:\Omega_{\X{R\cap\mathfrak{A}}/\mathcal{W}}\to\Omega_{\X{\mathfrak{A}}/\mathcal{W}}$ is an isomorphism, whence $H^0(\X{\mathfrak{A}},\Omega_{\X{\mathfrak{A}}/\mathcal{W}})=\mathcal{W}\omega(\mathfrak{A})$.

Third, if $C\subset \X{\mathfrak{A}}[p^n]$ is a suitable rank $p^n$ subgroup scheme of a finite flat group scheme $\X{\mathfrak{A}}[p^n]$ that is \'etale locally isomorphic to $\mathbb{Z}/p^n\mathbb{Z}$, we study geometric quotient of \X{\mathfrak{A}} by $C$ giving rise to a desired CM point $x(\mathfrak{a})$ associated to a proper $R_{cp^n}$-ideal $\mathfrak{a}$ prime to $p$. To be more precise, note that $\X{\mathfrak{A}}[p^n]=\X{\mathfrak{A}}[\bar{\mathfrak{p}}^n]\oplus\X{\mathfrak{A}}[\mathfrak{p}^n]=\mathbb{Z}/p^n\mathbb{Z}\oplus \mu_{p^n}$ over $\mathcal{W}$. If $\zeta_{p^n}$ and $\gamma_{p^n}$ are the canonical generators of $\mu_{p^n}$ and $\mathbb{Z}/p^n\mathbb{Z}$, respectively, then we actually consider $C$ to be one of the $p^{n-1}(p-1)$ rank $p^n$ finite flat subgroup schemes $C_u=\langle\zeta_{p^n}^{-u}\gamma_{p^n}\rangle$ of $\X{\mathfrak{A}}[p^n]$, for $1\leq u\leq p^n$ and $\mathrm{gcd}(u,p)=1$. Since the extension $W/\mathbb{Z}_p$ is unramified, $C_u$ as a finite flat subgruop scheme is well defined over $\mathcal{W}[\mu_{p^n}]$. Thus, the geometric quotient $\X{\mathfrak{A}}/C_u$ is defined over $\mathcal{W}[\mu_{p^n}]$. It can be shown that $\X{\mathfrak{A}}/C_u\cong\X{\mathfrak{a}_u}$ where $\mathfrak{a}_u$ is a representative of one of $p^{n-1}(p-1)$ proper $R_{cp^n}$-ideal classes in $\mathrm{Cl}_n^-$ that project to the proper ideal class of $\bar{\mathfrak{p}}^{-n}\mathfrak{A}$ in $\mathrm{Cl}_0^-$. 

The quotient map $\pi:\X{R_c}\twoheadrightarrow \X{\mathfrak{a}_u}$ is \'etale over $\mathcal{W}[\mu_{p^n}]$ so we obtain level structures $\eta^{(p)}(\mathfrak{a}_u)=\pi_{*}\eta^{(p)}(\mathfrak{A})=\pi\circ\eta^{(p)}(\mathfrak{A})$, $\ord{\mathfrak{a}}=\pi_{*}\ord{\mathfrak{A}}=\pi\circ\ord{\mathfrak{A}}$ and $\et{\mathfrak{a}}=\pi_{*}\et{\mathfrak{A}}=\pi\circ\et{\mathfrak{A}}$, as well as an invariant differential $\omega(\mathfrak{a})=(\pi^*)^{-1}\omega(\mathfrak{A})$ on $\X{\mathfrak{a}_u}$. In this way we created $p^{n-1}(p-1)$ CM points
\[x(\mathfrak{a}_u)=\xw{\mathfrak{a}_u}{\mathcal{W}[\mu_{p^n}]}\]
on $Sh$, equipped with an invariant differential $\omega(\mathfrak{a}_u)$ on \X{\mathfrak{a}_u}, where these $\mathfrak{a}_u$'s are representatives of exactly $p^{n-1}(p-1)$ proper $R_{cp^n}$-ideal classes in $\mathrm{Cl}_n^-$ that project to the proper ideal class of $\bar{\mathfrak{p}}^{-n}\mathfrak{A}$ in $\mathrm{Cl}_0^-$.

Note that the complex uniformization $\X{\mathfrak{a}_u}(\mathbb{C})=\mathbb{C}/\mathfrak{a}_u$ induces a canonical invariant differential $\omega_\infty(\mathfrak{a}_u)$ in $\Omega_{\X{\mathfrak{a}_u}/\mathbb{C}}$ by pulling back $\mathrm{d}u$, where $u$ is the standard variable on $\mathbb{C}$. Then one can define a period $\Omega_\infty\in\mathbb{C}^\times$ by $\omega(\mathfrak{a}_u)=\Omega_\infty\omega_\infty(\mathfrak{a}_u)$ (\cite{Ka} Lemma 5.1.45). Note that $\Omega_\infty$ does not depend on $\mathfrak{a}_u$ since $\omega(\mathfrak{a}_u)$ is induced by $\omega(R)$ on \X{R} by construction. 

As explained at the end of Section \ref{p-adicmf}, the ordinary part of the level structure at $p$, $\ord{\mathfrak{a}_u}:\mu_{p^\infty}\cong \X{\mathfrak{a}_u}[\mathfrak{p}^\infty]$ induces a trivialization $\widehat{\mathbb{G}}_m\cong \widehat{\X{\mathfrak{a}_u}}$ for the $p$-adic formal completion $\widehat{\X{\mathfrak{a}_u}}_{/W[\mu_{p^n}]}$ of \X{\mathfrak{a}_u} along its zero-section. We obtain an invariant differential $\omega_p(\mathfrak{a}_u)$ on $\widehat{\X{\mathfrak{a}_u}}_{/W[\mu_{p^n}]}$ by pushing forward $\frac{\mathrm{d}t}{t}$ on $\widehat{\mathbb{G}}_m$, which then extends to an invariant differential on \Xw{\mathfrak{a}_u}{W[\mu_{p^n}]} also denoted by $\omega_p(\mathfrak{a}_u)$. Then one can define a period $\Omega_p\in W^\times$, independent of $\mathfrak{a}_u$, by $\omega(\mathfrak{a}_u)=\Omega_p \omega_p(\mathfrak{a}_u)$ (\cite{Ka} Lemma 5.1.47).

\section{Serre--Tate deformation space}
In this section we restrict our attention to CM points  $x=[z,g]\in Sh(\mathbb{C})$ for which $z$ generates our fixed imaginary quadratic field $M$, i.e. $M=\mathbb{Q}(z)$, and so the fiber $E_x$ at $x$ of the universal elliptic curve over $Sh_{/\mathbb{Q}}$ has a CM type $(M,\Sigma)$.

Following Katz's exposition \cite{Ka78}, we recall some basics of the deformation theory of elliptic curves over complete local $W$-algebras whose residue field is $\bar{\mathbb{F}}_p$. Denote by $CL_{/W}$ the category of such algebras. 
Fix a CM point $x=(E_x,\eta^{(p)}_x, \eta^{\mathrm{ord}}_x\times \eta^{\mathrm{\acute{e}t}}_x)\in Sh$. The Serre--Tate deformation space $\widehat{S}$ represents the functor $\widehat{\mathcal{P}}:CL_{/W}\to SETS$ given by
\begin{equation}\label{stdef}
\widehat{\mathcal{P}}(A)=\{E_{/A}|E\otimes_A\bar{\mathbb{F}}_p=E_{x/\bar{\mathbb{F}}_p}\}_{/\cong}\;.
\end{equation}
First, if $\widehat{Sh}^{(p)}_{x/W}$ is a formal completion of $Sh^{(p)}_{/\mathcal{W}}$ along $x=(E_x,\eta^{(p)}_x)\in Sh^{(p)}(\bar{\mathbb{F}}_p)$, then the universality of  $Sh^{(p)}$ furnishes $\widehat{Sh}^{(p)}_{x/W}\cong\widehat{S}_{/W}$. Indeed, $\widehat{Sh}^{(p)}_{x}$ classifies $(E,\eta^{(p)}_E)_{/A}$ with $(E,\eta^{(p)}_E)\otimes_{A}\bar{\mathbb{F}}_p=(E_x,\eta^{(p)}_x)$, and the level structure $\eta^{(p)}_x$ at the special fiber extends  uniquely to $\eta^{(p)}_E$ on $E_{/A}$ because $E[N]$ (for $N$ prime to $p$) is \'etale over $\mathrm{Spec}(A)$. Second, the Serre--Tate deformation theory furnishes a canonical isomorphism $\widehat{S}_{/W}\cong\widehat{\mathbb{G}}_{m/W}$. Indeed, $E_{/A}\in\widehat{\mathcal{P}}(A)$ is determined by the extension class of connected component--\'etale quotient exact sequence of Barsotti--Tate groups
\begin{equation}\label{cceq}
0\longrightarrow E[p^\infty]^\circ \stackrel{i_E}{\longrightarrow} E[p^\infty] \stackrel{i^*_E}{\longrightarrow} E[p^\infty]^{\acute{e}t} \longrightarrow 0\,,
\end{equation}
and by a theorem of Serre and Tate (Theorem 2.3 in \cite{minv}) such an extension class over $A$ is classified by
\[\mathrm{Hom}( E[p^\infty]^{\acute{e}t}_{/A},E[p^\infty]^\circ_{/A})\cong\mathrm{Hom}({\mathbb{Q}_p/\mathbb{Z}_p}_{/A},{\mu_{p^\infty}}_{/A})= \varprojlim_n\mu_{p^n}(A)=\widehat{\mathbb{G}}_m(A)\,.\]
Note that to make this identification we used the fixed $\eta^{\mathrm{ord}}_x:\mu_{p^\infty}\cong E_x[p^\infty]^\circ$ and its Cartier dual inverse $\eta^{\mathrm{\acute{e}t}}_x:\mathbb{Q}_p/\mathbb{Z}_p\cong E_x[p^\infty]^{\acute{e}t}$.

Let $t$ be the canonical coordinate of the Serre--Tate deformation space $\widehat{S}$ so that \[\widehat{S}\cong\widehat{\mathbb{G}}_m=\mathrm{Spf}(\varprojlim_n W[t,t^{-1}]/(t-1)^n)=\mathrm{Spf}(W[[T]])\quad (T=t-1)\; .\]
Strictly speaking, the Serre--Tate coordinate $t=t_x$ depends on the point $x\in Sh$  since we used the fixed $\eta^{\mathrm{ord}}_x$ and $\eta^{\mathrm{\acute{e}t}}_x$ to identify $\widehat{S}\cong\widehat{\mathbb{G}}_m$. 

For any deformation $E_{/A}$ of  $E_x$, where $A\in CL_{/W}$, we can compute the assigned value of the Serre--Tate coordinate $t(E_{/A})\in\widehat{\mathbb{G}}_m(A)$ in the following way (see Sections 2.1-2.3 of \cite{minv} for details). Noting that any object in $CL_{/W}$ is a projective limit of artinian objects, we may assume for the moment that $A$ is an artinian object in  $CL_{/W}$. Using $\ord{E}$ and $\et{E}$, the connected component--\'etale quotient exact sequence of Barsotti--Tate groups (\ref{cceq}) becomes
\[ 0\longrightarrow \mu_{p^\infty} \stackrel{i_E}{\longrightarrow} E[p^\infty] \stackrel{i^*_E}{\longrightarrow} \mathbb{Q}_p/\mathbb{Z}_p\longrightarrow 0 \;. \]
The Drinfeld's theorem in deformation theory (Theorem 2.1 in \cite{minv}) assures that $E^\circ(A)$ is killed by $p^{n_0}$ for sufficiently large $n_0$. Starting from $y\in E(\bar{\mathbb{F}}_p)$, we can always lift it to $\tilde{y}\in E(A)$ because of smoothness of $E_{/A}$, and the lift $\tilde{y}$ is determined modulo $\mathrm{Ker}(E(A)\rightarrow E(\bar{\mathbb{F}}_p))= E^\circ$ which is a subgroup of $E[p^n]$ if $n\geq n_0$. Thus $p^n\tilde{y}\in E^\circ(A)$ is uniquely determined by $y\in E(\bar{\mathbb{F}}_p)$, and if $y\in E[p^n]$ then $p^n\tilde{y}\in E^\circ(A)$, giving rise to a homomorphism $``{p^n}":E[p^n](\bar{\mathbb{F}}_p)\rightarrow E^\circ(A)$, via $y\mapsto p^n\tilde{y}$, called the Drinfeld lift of multiplication by $p^n$ (\cite{Ka78} Lemma 1.1.2). Taking $1\in\mathbb{Z}_p\cong T(\mathbb{Q}_p/\mathbb{Z}_p)$ and viewing it as $1=\varprojlim_n\frac{1}{p^n}$ for $\frac{1}{p^n}\in\mathbb{Q}_p/\mathbb{Z}_p[p^n]$, the value $``{p^n}" i_E^{*-1}(\frac{1}{p^n})\in \mu_{p^n}(A)$ becomes stationary if $n\geq n_0$ and we have
\[t(E_{/A})=\varprojlim_n ``{p^n}"i_E^{*-1}\left(\frac{1}{p^n}\right)\in \varprojlim_n \mu_{p^n}(A)=\widehat{\mathbb{G}}_m(A)\;.\]
When $E$ is a deformation over $A \in CL_{/W}$ for $A$ not being artinian, writing $A=\varprojlim_B B$ for artinian $B$, we define $t(E_{/A})= \varprojlim_B t(E\times_A B_{/B})$.
Note that $t(E_{/A})=q_{E/A}(1,1)$ where $q_{E/A}(\cdot,\cdot):TE[p^\infty]^{\acute{e}t}\times TE[p^\infty]^{\acute{e}t}\rightarrow \widehat{\mathbb{G}}_m(A)$ is a bilinear form in Section 2 of \cite{Ka78} that corresponds to deformation $E_{/A}$ in establishing representability of the deformation functor $\widehat{\mathcal{P}}$ by the formal torus $\mathrm{Hom}_{\mathbb{Z}_p}(TE[p^\infty]^{\acute{e}t}\times TE[p^\infty]^{\acute{e}t},\widehat{\mathbb{G}}_m)$.

By definition, $t(E_x)=1$ because the connected component--\'etale quotient exact sequence of $E_x[p^\infty]$ splits over $\mathcal{W}$ by complex multiplication and the existence of a section $s$
\[\xymatrix@1{ 0 \ar[r] & \mu_{p^\infty} \ar[r]^-{i_x} & E_x[p^\infty] \ar[r]^-{i^*_x} & \mathbb{Q}_p/\mathbb{Z}_p \ar@/^1pc/[l]^-{s} \ar[r] & 0
}\]
allows us to verify
\[t(E_x)=\varprojlim_n ``{p^n}"i_x^{*-1}\left(\frac{1}{p^n}\right)=\varprojlim_n p^n s \left(\frac{1}{p^n}\right)= \varprojlim_n 1= 1 \,. \]

Let $(\boldsymbol{\mathcal{E}},\boldsymbol{\eta})$ be the universal deformation of $E_x$ over $\widehat{S}\cong\widehat{\mathbb{G}}_m$. For each $p$-adic modular form $f\in V(N;W)$ we call the expansion
\[f(t):=f(\boldsymbol{\mathcal{E}},\boldsymbol{\eta})\in W[[T]]\quad (T=t-1)\]
a $t$-expansion of $f$ with respect to the Serre--Tate coordinate around a CM point $x\in Sh$. We have the following $t$-expansion principle:
\[\tag{t-exp} \text{The }t\text{-expansion: }f\mapsto f(t)\in W[[T]] \text{ determines }f\text{ uniquely.}\]
Let $d: V(N;W)\to V(N;W)$ be the Katz $p$-adic differential operator acting on $p$-adic modular forms whose effect  on the $q$-expansion of a modular form is given by
\begin{equation}\label{diffqexp}
d \sum_{n\geq 0}a(n,f)q^n = \sum_{n\geq0}na(n,f)q^n
\end{equation}
(\cite{Ka} (2.6.27)).
A key property of $d$ is that, by its construction (\cite{Ka78} Section 4.3.1), it is $\widehat{S}$-invariant and the canonical Serre--Tate coordinate $t$ is normalized so that 
\begin{equation}\label{katzdiff}
d=t\frac{\mathrm{d}}{\mathrm{d}t} \; .
\end{equation}
Note that the $t$-expansion of $f$ with respect to the Serre--Tate coordinate $t$ around a CM point $x$ can be computed as the Taylor expansion of $f$ with respect to the variable $T$ by first applying $\frac{\mathrm{d}}{\mathrm{d}T}$ and then evaluating the result at $x$ (see (4.6) of \cite{minv}).

\section{Anticyclotomic $p$-adic L-function}\label{sec:acycpadicL}

To any given $W$-valued $p$-adic measure $\mu$ on $\mathbb{Z}_p$ we can associate the corresponding formal power series $\Phi_\mu(t)$ by
\[\Phi_\mu(t)=\sum_{n=0}^\infty \left(\int_{\mathbb{Z}_p}\binom{x}{n}\mathrm{d}\mu(x)\right)T^n \quad (T=t-1)\,.\]
It is well known that the measure $\mu$ is determined by the formal power series $\Phi_\mu(t)$ and $\mu\mapsto\Phi_\mu$ induces an isomorphism $\mathscr{M}(\mathbb{Z}_p;W)\cong W[[T]]$ between the space $\mathscr{M}(\mathbb{Z}_p;W)$ of all $W$-valued measures on $\mathbb{Z}_p$ and the ring of formal power series $W[[T]]$. Moreover
\[
\int_{\mathbb{Z}_p}x^m\mathrm{d}\mu=\left(t\frac{\mathrm{d}}{\mathrm{d}t}\right)^m\Phi_\mu|_{t=1}\quad \text{for all}\, m\geq0\,.
\]
The space of measures $\mathscr{M}(\mathbb{Z}_p;W)$ is naturally a module over the ring $\mathrm{Cont}(\mathbb{Z}_p;W)$ of continuous $W$-valued functions on $\mathbb{Z}_p$ in the following way: for $\phi\in\mathrm{Cont}(\mathbb{Z}_p;W)$ and $\mu \in \mathscr{M}(\mathbb{Z}_p;W)$
\[ \int_{\mathbb{Z}_p}\xi\mathrm{d}(\phi\mu)=\int_{\mathbb{Z}_p}\xi(x)\phi(x)\mathrm{d}\mu(x)\quad \text{for all}\, \xi\in\mathrm{Cont}(\mathbb{Z}_p;W)\,. \]
When $\phi\in\mathrm{Cont}(\mathbb{Z}_p;W)$ is a locally constant function factoring through $\mathbb{Z}_p/p^n\mathbb{Z}_p$, we have 
\begin{equation}\label{fourier}
\Phi_{\phi\mu}=[\phi]\Phi_\mu \text{ where }[\phi]\Phi_\mu(t)=p^{-n}\sum_{b\in\mathbb{Z}/p^n\mathbb{Z}}\phi(b)\sum_{\zeta\in\mu_{p^n}}\zeta^{-b}\Phi_\mu(\zeta t)
\end{equation}
and
\begin{equation}\label{twmoment}
\int_{\mathbb{Z}_p}\phi(x)x^m\mathrm{d}\mu(x)=\left(t\frac{\mathrm{d}}{\mathrm{d}t}\right)^m([\phi]\Phi_\mu)|_{t=1}\;.
\end{equation}

Let $\mathfrak{A}$ be a proper $R_{N_{ns}}$-ideal prime to $p$ representing a class in $\mathrm{Cl}^-_0$. Let $d: V(N;W)\to V(N;W)$ be the Katz $p$-adic differential operator acting on $p$-adic modular forms. Classical modular forms are defined over a number field, so we may assume that $f$ is defined over a localization $\mathcal{V}$ of the ring of integers of a certain number field. We take a finite extension of $W$ generated by $\mathcal{V}$ and, abusing the symbol, we keep denoting it $W$. Then $(d^mf)(x(\mathfrak{A}),\omega_p(\mathfrak{A}))\in W$ by the rationality result of Katz (\cite{Ka} Theorem 2.6.7). We define a $W$-valued measure $\mathrm{d}\mu_{f,\mathfrak{A}}$ on $\mathbb{Z}_p$ by
\begin{equation} \label{katzmeasure}
\int_{\mathbb{Z}_p}\binom{x}{n}\mathrm{d}\mu_{f,\mathfrak{A}}(x)=\binom{d}{n}f(x(\mathfrak{A}),\omega_p(\mathfrak{A})) \quad \text{for all}\, n\geq0\,. 
\end{equation}
Then clearly
\[ \int_{\mathbb{Z}_p}x^m\mathrm{d}\mu_{f,\mathfrak{A}} = (d^mf)(x(\mathfrak{A}),\omega_p(\mathfrak{A})) \quad \text{for all}\, m\geq0\,. \]

From the point of view of the Serre--Tate deformation theory, the power series $\Phi_{\mu_{f,\mathfrak{A}}}(t)$ has a neat description given by the following
\begin{taylor}\label{taylor}
$\Phi_{\mu_{f,\mathfrak{A}}}(t)$ is the $t$-expansion of $f$ with respect to the Serre--Tate coordinate $t$ around the point $x(\mathfrak{A})$.
\end{taylor}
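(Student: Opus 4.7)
The plan is to match the two formal power series in $W[[T]]$ coefficient by coefficient. By the definition (\ref{katzmeasure}) of the measure $\mu_{f,\mathfrak{A}}$, we have
\[
\Phi_{\mu_{f,\mathfrak{A}}}(t) \;=\; \sum_{n\geq 0} \binom{d}{n} f\bigl(x(\mathfrak{A}),\omega_p(\mathfrak{A})\bigr)\,T^n,
\]
while the $t$-expansion of $f$ around $x(\mathfrak{A})$ is $F(T):=f(\boldsymbol{\mathcal{E}},\boldsymbol{\eta})=\sum_{n\geq 0} a_n T^n$ in $W[[T]]$. Thus it suffices to prove the coefficientwise identity $a_n = \binom{d}{n} f(x(\mathfrak{A}),\omega_p(\mathfrak{A}))$ for every $n\geq 0$.

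The two essential inputs are already assembled in the preceding section. First, by (\ref{katzdiff}), the Katz operator $d$ acts on the $t$-expansion around $x(\mathfrak{A})$ as $\delta := t\,\frac{\mathrm{d}}{\mathrm{d}t}$; since $d$ is $\widehat{S}$-invariant, any polynomial in $d$, in particular $\binom{d}{n}$, acts on $t$-expansions as the corresponding polynomial in $\delta$, so the $t$-expansion of $\binom{d}{n}f$ around $x(\mathfrak{A})$ is $\binom{\delta}{n}F(t)$. Second, the CM point $x(\mathfrak{A})$ sits at the origin of the Serre--Tate deformation space, i.e. at $t=1$ (equivalently $T=0$), because the connected--étale exact sequence for $E_{x(\mathfrak{A})}[p^\infty]$ splits over $\mathcal{W}$ thanks to complex multiplication; this is the computation $t(E_x)=1$ recalled at the end of Section 4. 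Reading off the constant term of the $t$-expansion of $\binom{d}{n}f$ then recovers its value at the CM point, so the sought identity becomes the purely formal statement
\[
\binom{\delta}{n} F(t)\bigg|_{t=1} \;=\; a_n.
\]

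To verify this formal identity I would diagonalize $\delta$ in the basis $\{t^k\}$: from $\delta\, t^k=k\,t^k$ it follows that $\binom{\delta}{n}t^k=\binom{k}{n}t^k$, which at $t=1$ gives $\binom{k}{n}$. Expanding $t^k=(1+T)^k=\sum_j \binom{k}{j}T^j$ and matching against $F(T)=\sum_n a_n T^n$ reduces everything, after truncating modulo $T^{N+1}$ so that the change of basis between $\{T^m\}_{m\leq N}$ and $\{t^k\}_{k\leq N}$ is an honest invertible linear map, to the elementary combinatorial identity $\sum_j(-1)^{m-j}\binom{m}{j}\binom{j}{n}=\delta_{mn}$. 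The ``hard part'' of the proposition is not really hard at all: the whole content is the conceptual dictionary between the Katz operator $d$, the Serre--Tate coordinate $t$, and the base point $x(\mathfrak{A})$ of the deformation, all of which were set up in the preceding sections precisely so that the Taylor coefficients of the $t$-expansion coincide with the moments recorded by $\mu_{f,\mathfrak{A}}$.
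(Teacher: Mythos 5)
Your proposal is correct and follows essentially the same route as the paper: both arguments rest on the dictionary $d=t\frac{\mathrm{d}}{\mathrm{d}t}$ from (\ref{katzdiff}) together with the fact that $t(X(\mathfrak{A}))=1$ places the CM point at $T=0$, reducing the claim to reading off Taylor coefficients there. The only difference is in the elementary verification: where you diagonalize $\delta=t\frac{\mathrm{d}}{\mathrm{d}t}$ on the monomials $t^k$ and invoke binomial inversion, the paper simply quotes the closed-form operator identity $\binom{t\frac{\mathrm{d}}{\mathrm{d}t}}{n}=\frac{t^n}{n!}\frac{\mathrm{d}^n}{\mathrm{d}t^n}$, which evaluated at $t=1$ produces the $n$-th Taylor coefficient directly.
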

\begin{proof}
The proposition follows from the elementary identity $\binom{t\frac{\mathrm{d}}{\mathrm{d}t}}{n}=\frac{t^n}{n!}\frac{\mathrm{d}^n}{\mathrm{d}t^n}$ (\cite{LFE} Lemma 3.4.1 on page 80), (\ref{katzdiff}) and the fact that $t(\X{\mathfrak{A}})=1$. Indeed, if $T=t-1$, we have
\[f(t)=\sum_{n=0}^\infty \frac{1}{n!}\frac{\mathrm{d}^nf}{\mathrm{d}t^n}(1)T^n = \sum_{n=0}^\infty \binom{t\frac{\mathrm{d}}{\mathrm{d}t}}{n}f(1)T^n= \sum_{n=0}^\infty \binom{d}{n}f(x(\mathfrak{A}),\omega_p(\mathfrak{A}))T^n= \sum_{n=0}^\infty \left( \int_{\mathbb{Z}_p}\binom{x}{n}\mathrm{d}\mu_{f,\mathfrak{A}}\right)T^n = \Phi_{\mu_{f,\mathfrak{A}}}(t).\]
\end{proof}

One of the key facts regarding the measure $\mathrm{d}\mu_{f,\mathfrak{A}}$ is given in Proposition 8.3 of \cite{Br11a}. This proposition states that for any function $\phi:\mathbb{Z}/p^n\mathbb{Z}\to\mathbb{C}$, $n\geq 1$,  we have 
\begin{equation} \label{key}
[\phi]\Phi_{\mu_{f,\mathfrak{A}}}=\Phi_{\mu_{f|\phi^-,\mathfrak{A}}}
\end{equation}
where $\phi^-:\mathbb{Z}/p^n\mathbb{Z}\to\mathbb{C}$ is defined by $\phi^-(x):=\phi(-x)$. Moreover, if $\{\mathfrak{a}_u\}_{u\in (\mathbb{Z}/p^n\mathbb{Z})^\times}$ is the set of proper $R_{N_{ns}p^n}$-ideals whose classes in $\mathrm{Cl}_n^-$ project down to the class of $\mathfrak{A}$ in $\mathrm{Cl}^-_{0}$ via the canonical $\pi:\mathrm{Cl}^-_{n}\rightarrow \mathrm{Cl}^-_{0}$, and if $\phi : (\mathbb{Z}/p^n\mathbb{Z})^\times\to\mathbb{C}^\times$ is a primitive Dirichlet character of conductor $p^n$, 
we have
\begin{equation} \label{integration}
\int_{\mathbb{Z}_p}\phi(x)x^m\mathrm{d}\mu_{f,\mathfrak{A}}(x) = G(\phi)\sum_{u\in (\mathbb{Z}/p^n\mathbb{Z})^\times}\phi^{-1}(-u)d^mf(x(\mathfrak{a}_u),\omega_p(\mathfrak{a}_u))
\end{equation}
where $G(\phi)$ is the Gauss sum.

The Proposition 8.3 of \cite{Br11a} is proved by studying the effect of the Hecke operators on the Serre--Tate coordinate $t$ in the infinitesimal neighborhoods of the CM points on $Sh$. The Hecke action in the infinitesimal neighborhood of a CM point $x$ is two-fold: it affects the Serre--Tate coordinate $t$ but also the global level structure $\eta_x^{(p)}$ of a point on $Sh$ (see \cite{Br11a} for details, and Proposition 6.4 and Proposition 7.3 in particular).
Note that for our purpose it suffices to work over Igusa tower $Ig_{N^{(p)}}$ over $\mathfrak{M}(N^{(p)})$, where $N^{(p)}$ is prime-to-$p$ level. Thus, in characteristic 0, the modular form is on $\mathfrak{M}(N^{(p)}p^r)$ for some $r\geq 0$.
In a nutshell, we first verified there that $x(\mathfrak{a}_u)= x(\mathfrak{A})\circ \bigl(\begin{smallmatrix} 1 & up^{-n} \\ 0 & 1 \end{smallmatrix} \bigr)$ on $Sh/\widehat{\Gamma}_1(N^{(p)}p^r)$ for a proper $R_{N_{ns}p^n}$-ideal  $\mathfrak{a}_u$ such that $\mathfrak{a}_uR_{N_{ns}}=\mathfrak{A}R_{N_{ns}}$. Then we computed  the Serre--Tate coordinate around $x(\mathfrak{A})$ of the point $x(\mathfrak{a}_u)$ on $Sh$, namely $t(x(\mathfrak{a}_u))=\zeta^u_{p^n}$, for a fixed choice of a primitive $p^n$-th root of unity $\zeta_{p^n}$. That being said, (\ref{integration}) follows from Proposition \ref{taylor} combined with (\ref{fourier}) and (\ref{twmoment}).

Let $\{\mathfrak{A}_1,\ldots,\mathfrak{A}_{H^-}\}$ be a complete set of representatives for $\mathrm{Cl}^-_0$ and let $\widehat{\mathfrak{A}}_j=A_j\widehat{R}_{N_{ns}}$ for $A_j\in M^{\times}_{\mathbb{A}}$, $j=1,\ldots,H^-$. We may assume that $A_{j,p}=1$ for all $j=1,\ldots,H^-$. An explicit coset decomposition 
\begin{equation}\label{coset}
\mathrm{Cl}^-_\infty=\bigsqcup_{j=1}^{H^-}\mathfrak{A}_j^{-1}\mathbb{Z}_p^\times
\end{equation}
enables us to construct a $W$-valued measure $\mathrm{d}\mu_f$ on $\mathrm{Cl}^-_\infty$ by piecing together $H^-$ distinct $W$-valued measures $\mathrm{d}\mu_j$ on $\mathbb{Z}_p^\times$, so that for every continuous function $\xi:\mathrm{Cl}^-_\infty\to W$ we have 
\[\int_{\mathrm{Cl}^-_\infty} \xi\mathrm{d}\mu_f=\sum_{j=1}^{H^-}\int_{\mathbb{Z}_p^\times}\xi(\mathfrak{A}_j^{-1}x)\mathrm{d}\mu_j(x)\,.\]

To obey technical assumptions of Hida's Waldspurger-type of formula, namely the Theorem 4.1 from \cite{HidaNV}, we provided a recipe in Section 8.2 of \cite{Br11a} on how to fix once and for all an arithmetic Hecke character $\lambda:M^{\times} \backslash M^{\times}_{\mathbb{A}}\to\mathbb{C}^\times$ so that $\lambda(a_\infty)=a_\infty^k$ and $\lambda|_{\mathbb{A}^\times}=\boldsymbol{\psi}^{-1}$, where $\boldsymbol{\psi}=\psi|\cdot|_{\mathbb{A}}^{-k}$ is athe central character of $\mathbf{f}$ (see Section 6 of \cite{Br11a}  for definition). Due to the criticality assumption $\lambda|_{\mathbb{A}^\times}=\boldsymbol{\psi}^{-1}$, the conductor of $\lambda$ is not independent of the conductor $c(\psi)$ of nebentypus $\psi$. Thus we provide the following

\begin{recipe} 
Note that regardless of the choice of $\lambda$ we have:
\begin{itemize}
\item at primes $l\nmid c(\psi)$ Hecke character $\lambda$ is unramified and
\item at non-split primes $l|c(\psi)$ the conductor of $\lambda_l$ divides $l^{\mathrm{ord}_l(c(\psi))}$ hence does not exceed $l^{\nu(l)}$.
\end{itemize}
However, if a split prime $l|c(\psi)$ we make a choice as follows. Note that $M_l^\times= M_{\bar{\mathfrak{l}}}^\times \times M_{\mathfrak{l}}^\times  =\mathbb{Q}_l^\times\times\mathbb{Q}_l^\times$ and consequently $\lambda_l = \lambda_{\bar{\mathfrak{l}}}\lambda_{\mathfrak{l}}$. Then
\begin{itemize}
\item at split primes $l|c(\psi)$ we choose $\lambda_l$ so that its conductor is supported at $\mathfrak{l}$, that is, we choose $\lambda_{\bar{\mathfrak{l}}}$  to be unramified and $\lambda_{\mathfrak{l}}$ to have conductor $\mathfrak{l}^{\mathrm{ord}_l(c(\psi))}$. 
\end{itemize}
\end{recipe}
Set
\[ f^{(p)}=\sum_{p\nmid j}a(j,f)q^j \; .\]
By adjoining the values of the arithmetic Hecke character $\lambda$ to $W$ we obtain a finite extension of $W$ which, abusing the symbol, we keep denoting $W$. Writing $\widehat{\lambda}:M^{\times} \backslash M^{\times}_{\mathbb{A}}\to W^\times$ for the $p$-adic avatar of $\lambda$ defined by $\widehat{\lambda}(x)=\lambda(x)x_p^k$,  the $W$-valued measures $\mathrm{d}\mu_j$ are given by
\[\mathrm{d}\mu_j=\widehat{\lambda}({\mathfrak{A}_j^{-1}})\mathrm{d}\mu_{f^{(p)},\mathfrak{A}_j} \qquad \text{for } j=1,\ldots,H^-.\]
Note that (\ref{key}) guarantees that the measures $\mathrm{d}\mu_{f^{(p)},\mathfrak{A}_j}$ are indeed supported on $\mathbb{Z}_p^\times$. Since $A_{j,p}=1$ we actually have $\widehat{\lambda}({\mathfrak{A}_j^{-1}})=\lambda({\mathfrak{A}_j^{-1}})$.

Let $\chi:M^{\times} \backslash M^{\times}_{\mathbb{A}}\to\mathbb{C}^\times$ be an anticyclotomic arithmetic Hecke character such that $\chi(a_\infty)=a_\infty^{m(1-c)}$ for some $m\geq 0$ and of conductor $N_{ns}p^n$, where $n\geq \mathrm{ord}_p(N)$ is an arbitrary integer, or of conductor $N_{ns}$. Write $\widehat{\chi}:M^{\times} \backslash M^{\times}_{\mathbb{A}}\to \mathbb{C}_p^\times$ defined by $\widehat{\chi}(x)=\chi(x)x_p^{m(1-c)}$ for its $p$-adic avatar. Denote by $I_M= M^\times \left\backslash M^{\times}_{\mathbb{A}}\right/ M^\times_\infty$ the idele class group. Since $(R\otimes_{\mathbb{Z}}\mathbb{Z}_p)^\times= R_{\bar{\mathfrak{p}}}^\times \oplus R_{\mathfrak{p}}^\times \cong\mathbb{Z}_p^\times\oplus\mathbb{Z}_p^\times$, the projection $\mathrm{pr}:I_M\rightarrow \mathrm{Cl}^-_\infty$ at $p$-th place is given by $\mathrm{pr}_p : x_p \mapsto x_p^{1-c}$. Thus, in the view of the coset decomposition (\ref{coset}), $\widehat{\chi}|_{\mathbb{Z}_p^\times}(z) =\tilde{\chi}_p(z)z^m$ for $z=x_p^{1-c}$ where 
$\tilde{\chi}_p=\chi_p\circ \mathrm{pr}_p$ is a primitive Dirchlet character of conductor $p^n$.

The Mazur--Mellin transform of measure $\mathrm{d}\mu_f$ given by
\[ \mathscr{L}(f;\xi)=\int_{\mathrm{Cl}^-_\infty}\xi{d}\mu_f\;, \qquad \xi \in \mathrm{Hom}_{cont}(\mathrm{Cl}^-_\infty,W^\times)\,,\]
is a $p$-adic analytic Iwasawa function on the $p$-adic Lie group $\mathrm{Hom}_{cont}(\mathrm{Cl}^-_\infty,W^\times)$ and, using (\ref{integration}), for the $p$-adic avatar $\widehat{\chi}$ we have: 
\[
\frac{\mathscr{L}(f;\widehat{\chi})}{\mathrm{\Omega}_p^{k+2m}}= \frac{G(\tilde{\chi}_p)}{\mathrm{\Omega}_p^{k+2m}} \sum_{j=1}^{H^-}\sum_{u\in (\mathbb{Z}/p^n\mathbb{Z})^\times}\lambda(\mathfrak{a}_{j,u}^{-1})\chi(\mathfrak{a}_{j,u}^{-1}) (d^mf^{(p)})(x(\mathfrak{a}_{j,u}),\omega_p(\mathfrak{a}_{j,u}))\,.
 \]

Here, for each $j=1,\ldots,H^-$, we have $x(\mathfrak{a}_{j,u})=x(\mathfrak{A}_j)\circ \bigl(\begin{smallmatrix} 1 & up^{-n} \\ 0 & 1 \end{smallmatrix} \bigr)$ on $Sh/\widehat{\Gamma}_1(N^{(p)}p^r)$, $u\in (\mathbb{Z}/p^n\mathbb{Z})^\times$, and $\mathfrak{a}_{j,u}$'s are exactly the $p^{n-1}(p-1)$ proper $R_{N_{ns}p^n}$-ideal class representatives such that $\mathfrak{a}_{j,u}R_{N_{ns}}=\mathfrak{A}_jR_{N_{ns}}$, that is, whose classes in $\mathrm{Cl}_n^-$ project down to the class of $\mathfrak{A}$ in $\mathrm{Cl}^-_{0}$. Note that the proper $R_{N_{ns}}p^n$-ideals $\mathfrak{a}_{j,u}$ in the above double sum form a complete set of class representatives for $\mathrm{Cl}_n^-$, and by means of the Hida's Waldspurger-type of formula from \cite{HidaNV} the double sum is related to the central critical value $L(\frac{1}{2},\hat{\pi}_{\mathbf{f}}\otimes (\lambda\chi)^-)$ ultimately yielding the interpolation formula
\begin{equation}\label{eq:interpolation}
\left(\frac{\mathscr{L}(f;\widehat{\chi})}{\mathrm{\Omega}_p^{k+2m}}\right)^2=C(\lambda,\chi,m)\frac{L(\frac{1}{2},\hat{\pi}_{\mathbf{f}}\otimes (\lambda\chi)^-)}{\left(\mathrm{\Omega}_\infty^{k+2m}\right)^2}
\end{equation}
 for an explicit constant $C(\lambda,\chi,m)$ (see Section 9 of \cite{Br11a} and the Main Theorem there in particular).

\section{Two variable anticyclotomic $p$-adic L-function attached to a Hida family}\label{sec:twovariables}

Write $\mathbf{p}$ for 4 or $p$ according to $p=2$ or not. Let $W_0$ be a (sufficiently large) discrete valuation ring finite flat over $\mathbb{Z}_p$. We choose and fix a generator $\gamma:=1 + \mathbf{p}$ of $\Gamma:=1+\mathbf{p}\mathbb{Z}_p$, which is the maximal torsion-free subgroup of $\mathbb{Z}_p^\times$, and identify the Iwasawa algebra $\Lambda := W_0[[\Gamma]]$ with the formal power series ring $W_0[[T]]$ via $\gamma \mapsto 1+T$. 

Let $\psi: \mathbb{Z}/Np^r\mathbf{p}\mathbb{Z}\to W_0^\times$ and consider the space of cusp forms $S_k(\Gamma_0(Np^r\mathbf{p}), \psi)$, $(p\nmid N, r\geq 0)$. Denote by $\mathbb{Z}[\psi]$ and $\mathbb{Z}_p[\psi]$ the rings generated by the values of $\psi$ over $\mathbb{Z}$ and $\mathbb{Z}_p$, respectively. The Hecke algebra over $\mathbb{Z}[\psi]$ is the subalgebra of the linear endomorphism algebra of $S_k(\Gamma_0(Np^r\mathbf{p}), \psi)$ generated by the Hecke operators 
\[h=Z[\psi][T(n)\mid n=1,2,\ldots]\subset \mathrm{End}(S_k(\Gamma_0(Np^r\mathbf{p}), \psi)\,.\]
If we set $h_{k,\psi}=h_{k,\psi/W_0}=h\otimes_{\mathbb{Z}[\psi]}W_0$, the ordinary Hecke algebra $\mathbf{h}^{\mathrm{ord}}_{k,\psi}\subset h_{k,\psi}$ is the maximal ring direct summand on which $U(p)$ is invertible. In other words, if we write $e=\lim_{n\to\infty}U(p)^{n!}$ under the $p$-adic topology of $h_{k,\psi}$, we have $\mathbf{h}^{\mathrm{ord}}_{k,\psi}= e h_{k,\psi}$. Let $\omega$ denote the Teichm\"uller character modulo $\mathbf{p}$. By utilizing the fixed embedding $\iota_p : \bar{\mathbb{Q}} \hookrightarrow \mathbb{C}_p$, the idempotent $e$ acts on the classical space $S_k(\Gamma_0(Np^r\mathbf{p}), \psi)$ as well, and we denote the image by $S^{\mathrm{ord}}_k(\Gamma_0(Np^r\mathbf{p}), \psi)$.

As constructed in \cite{Hi86a} and \cite{Hi86b}, the unique ``big'' ordinary Hecke algebra $\mathbf{h}^{\mathrm{ord}}$ is characterized by the following two properties usually referred to as Control theorems:
\begin{itemize}
\item[(C1)] $\mathbf{h}^{\mathrm{ord}}$ is free of finite rank over $\Lambda$,
\item[(C2)] When $k\geq 2$, $\mathbf{h}^{\mathrm{ord}}/(1+T-\gamma^k)\mathbf{h}^{\mathrm{ord}} \cong \mathbf{h}^{\mathrm{ord}}_{k,\psi_k} \text{ for }\psi_k:=\psi\omega^{-k}$.
\end{itemize}

We can let $a=(a_p,a_N)\in \mathbb{Z}_p^\times \times (\mathbb{Z}/N\mathbb{Z})$ act on $V(N;W_0)$ by
\[ f|\langle a\rangle (E,i_N,i_p) = f(E,a_Ni_N,a_p^{-1}i_p)\,.\]
For $s\in \mathbb{Z}_p$ and a character $\varepsilon: \Gamma \to \mu_{p^\infty}$, a $p$-adic modular form $f\in V(N;\tilde{W}_0)$ is said to have weight $(s,\varepsilon)$ if $f|\langle z\rangle = \varepsilon(z)z^sf$ for all $z\in \Gamma$.
A $\Lambda$-adic modular form is a formal $q$-expansion
\[\mathcal{F}(T;q)=\sum_{n\geq 0}a(n;\mathcal{F})(T)q^n\in \Lambda[[q]]\]
such that $\mathcal{F}(\gamma^s-1;q)$ is the $q$-expansion of a $p$-adic modular form of weight $(s,\cdot)$, for all $s\in \mathbb{Z}_p$. 
We denote the space of $\Lambda$-adic modular forms by $G(N;\Lambda)$. We call $\mathcal{F}$ a $\Lambda$-adic cusp form if $\mathcal{F}(\gamma^s-1;q)$ is a $p$-adic cusp form for all $s\in \mathbb{Z}_p$.
A $\Lambda$-adic cusp form is called arithmetic if for all sufficiently large positive integers $k$, $\mathcal{F}(\gamma^k-1;q)$ is the $q$-expansion of a classical cusp form of weight $k$. Denote by $S(N,\psi;\Lambda)$ (resp. $S^{\mathrm{ord}}(N,\psi;\Lambda)$) the space of all arithmetic $\Lambda$-adic cusp forms such that $\mathcal{F}(\gamma^k-1;q)\in S_k(\Gamma_0(Np^r\mathbf{p}), \psi_k;W_0)$ (resp. $\mathcal{F}(\gamma^k-1;q)\in S^{\mathrm{ord}}_k(\Gamma_0(Np^r\mathbf{p}), \psi_k;W_0)$ for all sufficiently large positive integers $k$. The space $S^{\mathrm{ord}}(N,\psi;\Lambda)$ is free of finite rank over $\Lambda$ and moreover is the $\Lambda$-dual of the ``big'' ordinary Hecke algebra $\mathbf{h}^{\mathrm{ord}}$. Thus, all the structural properties of $S^{\mathrm{ord}}(N,\psi;\Lambda)$ follow from the structural properties of $\mathbf{h}^{\mathrm{ord}}$, and in particular the specialization $\mathcal{F}\mapsto\mathcal{F}(\gamma^k-1;q)$ yields
\begin{itemize}
\item[(C$2^\prime$)] When $k\geq 2$, $S^{\mathrm{ord}}(N,\psi;\Lambda)\otimes_\Lambda \Lambda/(1+T-\gamma^k)\cong S^{\mathrm{ord}}_k(\Gamma_0(Np^r\mathbf{p}), \psi_k;W_0)$.
\end{itemize}
We refer to the family $\mathcal{F}=\{\mathcal{F}(\gamma^k-1;q)\}_k$ of ordinary  Hecke eigenforms as a $p$-adic analytic Hida family. Thus, any element in  $S^{\mathrm{ord}}_k(\Gamma_0(Np^r\mathbf{p}), \psi_k;W_0)$ for $k\geq 2$ can be lifted to a $p$-adic analytic Hida family.

Each element $\Phi(T)\in \Lambda$ gives rise to a bounded $p$-adic measure on $\Gamma$ by setting
\[\int_{\Gamma}z^s\mathrm{d}\Phi = \Phi (\gamma^s-1) \]
for all $p$-adic integers $s$. Without loss of generality we may assume $W_0 \subset W$. Note that the space $V(N;W)$ of $p$-adic modular forms over $W$ has a well defined norm $|f|_p=\mathrm{sup}_n|a(n,f)|_p$. For a given $\Lambda$-adic modular form $\mathcal{G}\in G(N;\Lambda)$, which is not necessarilly arithmetic and not necessarilly ordinary, following Section 3.2.4 of \cite{GME} we can define a bounded $p$-adic measure on $\Gamma$ with values  in $V(N;W)$ by
\begin{equation}\label{lambda-measure1}
\int_{\Gamma}z^s\mathrm{d}\mathcal{G}= \sum_{n\geq 0}a(n;\mathcal{G})(\gamma^s-1)q^n 
\end{equation}
for all $p$-adic integers $s$. 

Let $\mathcal{F}=\{\mathcal{F}(\gamma^k-1;q)\}_k=\{f_k\}_k\in S^{\mathrm{ord}}(N,\psi;\Lambda)$ be a $p$-adic analytic Hida family of Hecke eigenforms $f_k\in S^{\mathrm{ord}}_k(\Gamma_0(Np^r\mathbf{p}), \psi_k;W_0)$. We set
\[\mathcal{F}^{(p)}(T;q)=\sum_{(n,p)=1}a(n;\mathcal{F})(T)q^n\in \Lambda[[q]]\]
and note that $\mathcal{F}^{(p)}$, even though non-ordinary, is a $\Lambda$-adic modular form in $G(N;\Lambda)$ and hence it gives rise to a bounded $V(N;W)$-valued measure $\mathrm{d}\mathcal{F}^{(p)}$ on $\Gamma$ by means of (\ref{lambda-measure1}).

In a way consistent with the recipe provided in Section \ref{sec:acycpadicL}, we are going to associate to each $f_k$ an arithmetic Hecke character $\lambda_k$ of $M$ of $\infty$-type $(k,0)$ so that $\lambda_k|_{\mathbb{A}^\times}=\boldsymbol{\psi}_k^{-1}$ for the central character $\boldsymbol{\psi}_k:= \psi_k|\cdot|_{\mathbb{A}}^{-k}=\psi\omega^{-k}|\cdot|_{\mathbb{A}}^{-k}$ of $\pi_{\mathbf{f}_k}$. First, we fix once and for all a finite order arithmetic Hecke character $\tilde{\psi}:M^{\times} \backslash M^{\times}_{\mathbb{A}}\to\mathbb{C}^\times$ such  that $\tilde{\psi}|_{\mathbb{A}^\times}=\psi^{-1}$. Moreover, noting that for a split prime $l|c(\psi)$ we have $M_l^\times= M_{\bar{\mathfrak{l}}}^\times \times M_{\mathfrak{l}}^\times  =\mathbb{Q}_l^\times\times\mathbb{Q}_l^\times$ and consequently $\tilde{\psi}_l = \tilde{\psi}_{\bar{\mathfrak{l}}}\tilde{\psi}_{\mathfrak{l}}$,  we choose $\tilde{\psi}_l$ so that its conductor is supported at $\mathfrak{l}$, that is, we choose $\lambda_{\bar{\mathfrak{l}}}$  to be unramified and $\lambda_{\mathfrak{l}}$ to have conductor $\mathfrak{l}^{\mathrm{ord}_l(c(\psi))}$. Second, we fix once and for all an arithmetic Hecke character $\tilde{\omega}:M^{\times} \backslash M^{\times}_{\mathbb{A}}\to\mathbb{C}^\times$ of $\infty$-type $(1,0)$ such that $\tilde{\omega}|_{\mathbb{A}^\times}=\omega|\cdot|_{\mathbb{A}}$ and we choose $\tilde{\omega}_p=\tilde{\omega}_{\bar{\mathfrak{p}}}\tilde{\omega}_{\mathfrak{p}}$ so that its conductor is supported at $\mathfrak{p}$, that is, we choose $\tilde{\omega}_{\bar{\mathfrak{p}}}$  to be unramified and $\tilde{\omega}_{\mathfrak{p}}$ to have conductor $\mathfrak{p}$. Then for the Hecke eigenform $f_k$, the arithmetic Hecke character $\lambda_k:=\tilde{\psi}\tilde{\omega}^k$ is of $\infty$-type $(k,0)$, obeys the recipe provided in Section \ref{sec:acycpadicL}, and $\lambda_k|_{\mathbb{A}^\times}=\psi^{-1} \omega^k|\cdot|^k_{\mathbb{A}}= \boldsymbol{\psi}_k^{-1}$.

Recall that $\{\mathfrak{A}_1,\ldots,\mathfrak{A}_{H^-}\}$ is a complete set of representatives for $\mathrm{Cl}^-_0$ such that $\widehat{\mathfrak{A}}_j=A_j\widehat{R}_{N_{ns}}$ for $A_j\in M^{\times}_{\mathbb{A}}$, $j=1,\ldots,H^-$, chosen so that $A_{j,p}=1$ for all $j=1,\ldots,H^-$. Note that (\ref{coset}) yields a coset decomposition
\[ \mathrm{Cl}^-_\infty\times \Gamma =\bigsqcup_{j=1}^{H^-}\mathfrak{A}_j^{-1}\mathbb{Z}_p^\times\times \Gamma\,,\]
enabling us to construct a $W$-valued measure $\mathrm{d}\mu$ on $\mathrm{Cl}^-_\infty\times \Gamma$ by piecing together $H^-$ distinct $W$-valued measures $\mathrm{d}\mu_j$ on $\mathbb{Z}_p^\times\times \Gamma$, so that 
\begin{equation}\label{piece-together}
\int_{\mathrm{Cl}^-_\infty \times \Gamma}\phi_1\otimes\phi_2\mathrm{d}\mu:=\sum_{j=1}^{H^-}\phi_1(\mathfrak{A}^{-1}_j)\int_{\mathbb{Z}_p^\times\times \Gamma}\phi_1\otimes\phi_2\mathrm{d}\mu_j \text{ for } \phi_1\otimes\phi_2\in \mathrm{Cont}(\mathrm{Cl}^-_\infty\times \Gamma; W).
\end{equation}

The identification $\iota:\Gamma \cong \mathbb{Z}_p$ induces the indentification of measure spaces $\mathscr{M}(\Gamma;V(N;W))\cong \mathscr{M}(\mathbb{Z}_p;V(N;W))$, where a measure $\mu\in \mathscr{M}(\Gamma;V(N;W))$ corresponds to $\mu_+\in \mathscr{M}(\mathbb{Z}_p;V(N;W))$ via $\int_{\mathbb{Z}_p}\phi\mathrm{d}\mu_+=\int_{\Gamma}\phi\circ \iota\mathrm{d}\mu$. Furthermore, for any $\alpha\in \mathbb{Z}_p^\times$ we can define another measure $\mu_+|\alpha\in \mathscr{M}(\mathbb{Z}_p;V(N;W))$ by setting $\int_{\mathbb{Z}_p}\phi\mathrm{d}(\mu_+|\alpha)=\int_{\mathbb{Z}_p}\phi(\alpha x)\mathrm{d}\mu_+(x)$. In this way, for any $\mu\in \mathscr{M}(\Gamma;V(N;W))$ we can think of a measure $\mu|\alpha\in \mathscr{M}(\Gamma;V(N;W))$ which satisfies $\int_{\Gamma}z^k\mathrm{d}(\mu|\alpha)=\alpha^k\int_{\Gamma}z^k\mathrm{d}\mu$.
Combining this with (\ref{lambda-measure1}), for every $j=1,\ldots,H^-$, we define a bounded $V(N;W)$-valued $p$-adic measure $\mathrm{d}\mathcal{F}^{(p)}_{\mathfrak{A}_j}$ on $\Gamma$ by
\begin{equation}\label{eq:key}
\int_{\Gamma}z^k\mathrm{d}\mathcal{F}^{(p)}_{\mathfrak{A}_j}:= \widehat{\lambda}_k(\mathfrak{A}^{-1}_j)\mathcal{F}^{(p)}(\gamma^k-1;q)=\widehat{\tilde{\psi}\tilde{\omega}^k}(\mathfrak{A}^{-1}_j)f_k^{(p)}\,.
\end{equation}
Note that here $\widehat{\tilde{\psi}\tilde{\omega}^k}(\mathfrak{A}^{-1}_j)=\tilde{\psi}\tilde{\omega}^k(\mathfrak{A}^{-1}_j)$ as $A_{j,p}=1$.

For any $f\in V(N;W)$ one can define a $V(N;W)$-valued $p$-adic measure $\mathrm{d}f$ on $\mathbb{Z}_p$ by
\[\int_{\mathbb{Z}_p} \binom{x}{n}\mathrm{d}f=\binom{d}{n}f \quad \text{for all}\, n\geq 0\,. \]
Considering a CM point $x=(E_x,\eta^{(p)}_x, \eta^{\mathrm{ord}}_x\times \eta^{\mathrm{\acute{e}t}}_x)_{/\mathcal{W}}\in Sh(\mathcal{W})$ and an invariant differential $\omega_p$ on $E_{x/W}$, we can evaluate $\int_{\mathbb{Z}_p}\phi\mathrm{d}f\in V(N;W)$ at a pair $(x,\omega_p)_{/W}$, thus getting a bounded $p$-adic $W$-valued measure $\mathrm{d}f(x,\omega_p)$ on $\mathbb{Z}_p$. In other words, if we denote by $ev_{(x,\omega_p)}: V(N;W)\to W$ the evaluation homomorphism, we have that 
\begin{equation}\label{katz-measure2}
\int_{\mathbb{Z}_p}\phi\mathrm{d}f(x,\omega_p) := ev_{(x,\omega_p)}\left(\int_{\mathbb{Z}_p}\phi\mathrm{d}f\right) \text{ for } \phi \in \mathrm{Cont}(\Gamma;W)
\end{equation}
defines a bounded $p$-adic $W$-valued measure on ${\mathbb{Z}_p}$. Note that for the measure $\mu_{f,\mathfrak{A}_j}$ defined by (\ref{katzmeasure}) we have 
\[ \mathrm{d}f(x(\mathfrak{A}_j),\omega_p(\mathfrak{A}_j))=\mu_{f,\mathfrak{A}_j}\,,\]
and whenever the Fourier coefficients of $f$ are supported at integers prime to $p$ (i.e. $a(n,f)=0$ when $p\mid n$) the measure is supported on $\mathbb{Z}_p^\times$. Here the CM points $x(\mathfrak{A}_j)$ and the invariant differentials $\omega_p(\mathfrak{A}_j)$ are the ones constructed in Section \ref{sec:cmpts} and used throughout Section \ref{sec:acycpadicL}.

Thus, for $\phi_2\in \mathrm{Cont}(\Gamma,W^\times)$, by (\ref{eq:key}) we have $\int_\Gamma \phi_2\mathrm{d}\mathcal{F}^{(p)}_{\mathfrak{A}_j} \in V(N;W)$ and in the sense of (\ref{katz-measure2}) we can consider $\mathrm{d}\left( \int_\Gamma \phi_2\mathrm{d}\mathcal{F}^{(p)}_{\mathfrak{A}_j} \right)(x(\mathfrak{A}_j),\omega_p(\mathfrak{A}_j))$ as a bounded $p$-adic $W$-valued measure on ${\mathbb{Z}_p}$. Then for each $j=1,\ldots,H^-$, we define a $W$-valued measure $\mathrm{d}\mathcal{F}^j$ on $\mathbb{Z}_p^\times\times \Gamma$ by setting
\[\int_{\mathbb{Z}_p^\times\times \Gamma}\phi_1\otimes\phi_2\mathrm{d}\mathcal{F}^j:=\int_{\mathbb{Z}_p^\times}\phi_1\mathrm{d}\left(\int_\Gamma \phi_2\mathrm{d}\mathcal{F}^{(p)}_{\mathfrak{A}_j}\right)(x(\mathfrak{A}_j),\omega_p(\mathfrak{A}_j)) \text{ for } \phi_1\otimes\phi_2\in \mathrm{Cont}(\mathbb{Z}_p^\times\times \Gamma; W).\]
Using (\ref{piece-together}), we finally  get a $W$-valued measure $\mathrm{d}\mathcal{F}$ on $\mathrm{Cl}^-_\infty \times \Gamma$ by setting
\[\int_{\mathrm{Cl}^-_\infty \times \Gamma}\phi_1\otimes\phi_2\mathrm{d}\mathcal{F}:=\sum_{j=1}^{H^-}\phi_1(\mathfrak{A}^{-1}_j)\int_{\mathbb{Z}_p^\times\times \Gamma}\phi_1\otimes\phi_2\mathrm{d}\mathcal{F}^j \text{ for } \phi_1\otimes\phi_2\in \mathrm{Cont}(\mathrm{Cl}^-_\infty\times \Gamma; W).\]

As usual, we embedd $\mathbb{Z}_{\geq 2}$ into $\mathrm{Hom}_{cont}(\Gamma,W^\times)$ via $k\mapsto (\gamma \mapsto \gamma^k)$.
Note that by our construction the Mazur--Mellin transform $\mathcal{L}(\mathcal{F};\cdot,\cdot)$ of $\mathrm{d}\mathcal{F}$ satisfies
\[\mathcal{L}(\mathcal{F};\xi,k)=\mathscr{L}(f_k; \xi) \qquad \text{for } \xi \in \mathrm{Hom}_{cont}(\mathrm{Cl}^-_\infty,W^\times)\text{ and } k\geq 2,\]
where $\mathscr{L}(f_k;\cdot)$ is the Mazur--Mellin transform of the measure $\mu_{f_k}$ constructed in Section \ref{sec:acycpadicL}.
Thus, if $\chi:M^{\times} \backslash M^{\times}_{\mathbb{A}}\to\mathbb{C}^\times$ is an anticyclotomic arithmetic Hecke character such that $\chi(a_\infty)=a_\infty^{m(1-c)}$ for some $m\geq 0$ and of conductor $N_{ns}p^n$, where $n\geq r+ \mathrm{ord}_p(\mathbf{p})$ is an arbitrary integer, or of conductor $N_{ns}$ and $k\geq 2$, by (\ref{eq:interpolation}) we have
 
\begin{equation}\label{eq:familyinterpolation}
\left(\frac{\mathcal{L}(\mathcal{F};\widehat{\chi},k)}{\mathrm{\Omega}_p^{k+2m}}\right)^2=C(k,\tilde{\psi},\tilde{\omega},\chi,m)\frac{L(\frac{1}{2},\hat{\pi}_{\mathbf{f}_k}\otimes (\tilde{\psi}\tilde{\omega}^k\chi)^-)}{\left(\mathrm{\Omega}_\infty^{k+2m}\right)^2}
\end{equation}
for an explicit constant $C(k,\tilde{\psi},\tilde{\omega},\chi,m)$. This completes the proof of the Theorem \ref{main}.

\section{$\mathbb{I}$-adic version of the main result}\label{sec:i-adic}

We present the construction in the case of a ``full'' ordinary family of modular forms living on an irreducible component $\mathrm{Spec}(\mathbb{I})\subset\mathrm{Spec}(\mathbf{h}^{\mathrm{ord}})$,  such that  $\mathrm{Spec}(\mathbb{I})$ is a finite flat irreducible covering of $\mathrm{Spec}(\Lambda)$. Write $a(n)$ for the image of $T(n)$ in $\mathbb{I}$. Note that (C2) from previous section can be replaced by:

\begin{itemize}
\item[(C2($\mathbb{I})$)] When $k\geq 2$ and $\varepsilon: \Gamma \to \mu_{p^\infty}$ is a character, $\mathbf{h}^{\mathrm{ord}}/(1+T-\varepsilon(\gamma)\gamma^k)\mathbf{h}^{\mathrm{ord}} \cong \mathbf{h}^{\mathrm{ord}}_{k,\varepsilon\psi_k} \text{ for }\psi_k:=\psi\omega^{-k}$.
\end{itemize}

If a point $P\in \mathrm{Spec}(\mathbb{I})(\bar{\mathbb{Q}}_p)$ vanishes at $(1+T-\varepsilon(\gamma)\gamma^k)$ with $k\geq 2$, we call it an arithmetic point, and write $\varepsilon_P=\varepsilon$, $k_P=k$ and $p^{r(P)}$ for the order of $\varepsilon_P$. When $P=P_{(k,\varepsilon)}$ is arithmetic, by (C2($\mathbb{I})$)) there exists $f_{(k,\varepsilon)}\in S^{\mathrm{ord}}_k(\Gamma_0(Np^{r(P)}\mathbf{p}), \varepsilon\psi_k; \mathbb{I}/P_{(k,\varepsilon)})$ such that its $T(n)$-eigenvalue is given by $a_P(n):=P(a(n))\in \bar{\mathbb{Q}}_p$ for all $n$. Note that $\mathbb{I}/P_{(k,\varepsilon)}$ is a finite flat extension of $W_0$. Thus $\mathbb{I}$ gives rise to a family $\mathcal{F}=\{f_{(k,\varepsilon)}\mid \text{ arithmetic } P_{(k,\varepsilon)}\in \mathrm{Spec}(\mathbb{I}) \}$ of Hecke eigenforms which we call $p$-adic analytic Hida family. It is often referred to as ordinary or slope 0 family -- note that necessarily $|a_P(p)|_p=1$. Here the Hecke eigenvalue $a_P(n)$ for $T(n)$ is a $p$-adic analytic function on the rigid analytic space associated to the $p$-profinite formal spectrum $\mathrm{Spf}(\mathbb{I})$. Identifying $\mathrm{Spec}(\mathbb{I})(\bar{\mathbb{Q}}_p)$ with $\mathrm{Hom}_{W_0-\mathrm{alg}}(\mathbb{I},\bar{\mathbb{Q}}_p)$, each element $a\in \mathbb{I}$ gives rise to a function $a:\mathrm{Spec}(\mathbb{I})(\bar{\mathbb{Q}}_p)\to \bar{\mathbb{Q}}_p$ whose value at $(P:\mathbb{I}\to \bar{\mathbb{Q}}_p)$ is given by $a(P):=P(a)\in \bar{\mathbb{Q}}_p$. Let $\tilde{\mathbb{I}}$ denote the integral closure of $\Lambda$ in the quotient field of $\mathbb{I}$. Then $\mathrm{Spec}(\tilde{\mathbb{I}})$ is a finite covering of $\mathrm{Spec}(\mathbb{I})$ with surjection $\mathrm{Spec}(\tilde{\mathbb{I}})  \twoheadrightarrow \mathrm{Spec}(\mathbb{I})$, so abusing the defintion, we may regard the family $\mathcal{F}$ as being indexed by arithmetic points of $\mathrm{Spec}(\tilde{\mathbb{I}})(\bar{\mathbb{Q}}_p)$, where the arithmetic points $ \tilde{P}_{(k,\varepsilon)}\in \mathrm{Spec}(\tilde{\mathbb{I}})(\bar{\mathbb{Q}}_p)$ are the points above the arithmetic point $  P_{(k,\varepsilon)}\in \mathrm{Spec}(\mathbb{I})(\bar{\mathbb{Q}}_p)$. Let $\tilde{W}_0$ denote the integral closure of $W_0$ in $\bar{\mathbb{Q}}_p$.

In Section \ref{sec:acycpadicL} we chose a family of the central critical characters $\lambda_k$ obeying certain assumptions on its conductor described by the recipe in Section \ref{sec:acycpadicL}. In this section we construct the ``big'' character $\boldsymbol{\lambda} : M^{\times} \backslash M^{\times}_{\mathbb{A}}\to \mathbb{I}^\times$ such that for each $\tilde{P}_{(k,\varepsilon)}\in \mathrm{Spec}(\tilde{\mathbb{I}})(\bar{\mathbb{Q}}_p)$, the specializations $\lambda_{(k,\varepsilon)}:= \tilde{P}_{(k,\varepsilon)}(\boldsymbol{\lambda})$  restricted to the $\mathbb{A}^\times$ induces the inverse of the $p$-adic avatar of the central character of $\mathbf{f}_{(k,\varepsilon)}$.  Even though the specializations $\lambda_{(k,\varepsilon)}$  do not obey the requirements of the recipe provided in Section \ref{sec:acycpadicL}, if we assume Heegner hypothesis, namely that $N_{ns}=1$, the formula from a recent preprint \cite{Hs11} justifies the $p$-adic interpolation of the ``square root'' of the central critical L-value in this case.

Recall that for $s\in \mathbb{Z}_p$ and a character $\varepsilon: \Gamma \to \mu_{p^\infty}$, a $p$-adic modular form $f\in V(N;\tilde{W}_0)$ is said to have weight $(s,\varepsilon)$ if $f|\langle z\rangle = \varepsilon(z)z^sf$ for all $z\in \Gamma$.
For $\tilde{\mathbb{I}}$ as above, we define $\tilde{\mathbb{I}}$-adic modular form as a formal $q$-expansion
\[\mathcal{F}(\,\cdot\,;q)=\sum_{n\geq 0}a(n;\mathcal{F})(\,\cdot\,)q^n\in \tilde{\mathbb{I}}[[q]]\]
such that $\mathcal{F}(\tilde{P}_{(s,\varepsilon)};q)$ is the $q$-expansion of a $p$-adic modular form of weight $(s,\varepsilon$), for all weights $(s,\varepsilon)$. We denote the space of $\tilde{\mathbb{I}}$-adic modular forms by $G(N;\tilde{\mathbb{I}})$. We call $\mathcal{F}$ a $\tilde{\mathbb{I}}$-adic cusp form if $\mathcal{F}(\tilde{P}_{(s,\varepsilon)};q)$ is a $p$-adic cusp form for all weights $(s,\varepsilon)$.
A $\tilde{\mathbb{I}}$-adic cusp form is called arithmetic if for all sufficiently large positive integers $k$, and all characters  $\varepsilon: \Gamma \to \mu_{p^\infty}$, $\mathcal{F}(\tilde{P}_{(k,\varepsilon)};q)$ is the $q$-expansion of a classical cusp form of weight $k$. Denote by $S(N,\psi;\tilde{\mathbb{I}})$ (resp. $S^{\mathrm{ord}}(N,\psi;\tilde{\mathbb{I}})$) the space of all arithmetic $\tilde{\mathbb{I}}$-adic cusp forms such that $\mathcal{F}(\tilde{P}_{(k,\varepsilon)};q)\in S_k(\Gamma_0(Np^r\mathbf{p}), \varepsilon \psi_k;\tilde{\mathbb{I}}/\tilde{P}_{(k,\varepsilon)})$ (resp. $\mathcal{F}(\tilde{P}_{(k,\varepsilon)};q)\in S^{\mathrm{ord}}_k(\Gamma_0(Np^r\mathbf{p}), \varepsilon\psi_k;\tilde{\mathbb{I}}/\tilde{P}_{(k,\varepsilon)})$ for all sufficiently large positive integers $k$ and all characters  $\varepsilon: \Gamma \to \mu_{p^\infty}$. The specialization $\mathcal{F}\mapsto\mathcal{F}(\tilde{P}_{(k,\varepsilon)};q)$ yields
\begin{itemize}
\item[(C$2^{\prime\prime}$)] When $k\geq 2$, $S^{\mathrm{ord}}(N,\psi;\tilde{\mathbb{I}})\otimes_{\tilde{\mathbb{I}}} \tilde{\mathbb{I}}/\tilde{P}_{(k,\varepsilon)}\cong S^{\mathrm{ord}}_k(\Gamma_0(Np^r\mathbf{p}), \varepsilon \psi_k;\tilde{\mathbb{I}}/\tilde{P}_{(k,\varepsilon)})$.
\end{itemize}
We refer to the family $\mathcal{F}=\{\mathcal{F}(\tilde{P}_{(k,\varepsilon)};q)\}_{(k,\varepsilon)}$ of ordinary  Hecke eigenforms as a $p$-adic analytic Hida family. Thus any element in  $S^{\mathrm{ord}}_k(\Gamma_0(Np^r\mathbf{p}), \varepsilon\psi_k;\tilde{\mathbb{I}}/\tilde{P}_{(k,\varepsilon)})$ for $k\geq 2$ can be lifted to a $p$-adic analytic Hida family.

Let $\underline{\Lambda}=W_0[[\mathbb{Z}_p^\times]]$ and denote by $z\mapsto [z]$ the canonical inclusion of group-like elements $\mathbb{Z}_p^\times\hookrightarrow \underline{\Lambda}^\times$. Note that the space $V(N;\tilde{W}_0)$ of $p$-adic modular forms over $\tilde{W}_0$ has a well defined norm $|f|_p=\mathrm{sup}_n|a(n,f)|_p$.

\begin{i-adic} We consider $V(N;\tilde{\mathbb{I}})=V(N;\tilde{W}_0)\widehat{\otimes}_{\tilde{W}_0}\tilde{\mathbb{I}}$. Then
\[G(N;\tilde{\mathbb{I}})=\{f\in V(N;\tilde{\mathbb{I}})\mid f|\langle z \rangle = [z]f \text{ for all } z\in \Gamma\}\,. \]
\end{i-adic}
\begin{proof}
This is essentially proved as Theorem 3.2.16 on page 244 of \cite{GME} (the proof itself is on the page 243). The space $V(N,\tilde{\mathbb{I}})$ has two $\tilde{\mathbb{I}}$-module structures, one coming from the base ring $\tilde{\mathbb{I}}$ and another coming from the action of $\Gamma$ by means of the diamond operators $\langle z \rangle$. Each $\mathcal{F}\in V(N;\tilde{\mathbb{I}})$ such that $\mathcal{F}|\langle z \rangle = [z]\mathcal{F}$ for all $z\in \Gamma$, has a $q$-expansion at $\infty$ and, by definition we have a natural map
\[
V(N;\tilde{\mathbb{I}}) \otimes_{\tilde{\mathbb{I}}}\tilde{\mathbb{I}}/\tilde{P}_{(k,\varepsilon)}\to V(N;\tilde{\mathbb{I}}/\tilde{P}_{(k,\varepsilon)})
\]
for each $\tilde{P}_{(k,\varepsilon)}:\tilde{\mathbb{I}}\to \bar{\mathbb{Q}}_p$ taking $\mathcal{F}$ to $\mathcal{F}(\tilde{P}_{(k,\varepsilon)};q)$. The tensor product is taken using $\tilde{\mathbb{I}}$-module structure induced by the diamond operators. However, both $\tilde{\mathbb{I}}$-module structures coincide on $G(N;\tilde{\mathbb{I}})$ and consequently the map is injective by the $q$-expansion principle. Since the map takes $\mathcal{F}\in G(N;\tilde{\mathbb{I}})$ to a $p$-adic modular form of weight $(k,\varepsilon)$, we conclude that $\mathcal{F}\in G(N;\tilde{\mathbb{I}})$. 

Conversely, let $\mathcal{F}\in G(N;\tilde{\mathbb{I}})$ and we may regard it as a $V(N;\tilde{W}_0)$-valued bounded measure $\mathrm{d}\mathcal{F}$ on $\Gamma$ in the sense analogous to (\ref{lambda-measure1}). For each test object $(E,i_N,i_p)_{/R}$, and for a $p$-adic  $\tilde{\mathbb{I}}$-algebra $R$ which may be regarded as a $p$-adic $\tilde{W}_0$-algebra, we can evaluate $\int_\Gamma \phi \mathrm{d}\mathcal{F}$ at $(E,i_N,i_p)_{/R}$. In this way we obtain a bounded $\tilde{W}_0$-linear form from the space  $\mathrm{Cont}(\Gamma;\tilde{W}_0)$ into $R$, which we denote $\underline{\mathcal{F}}(E,i_N,i_p)\in R\widehat{\otimes}_{\tilde{W}_0} \tilde{\mathbb{I}}$.
Note that $R$ being a $\tilde{\mathbb{I}}$-algebra, together with the $\tilde{\mathbb{I}}$-module structure $\tilde{\mathbb{I}}\times R \to R$ given by $\alpha\otimes r=\alpha r$, induces a surjective algebra homomorphism $m_R: R\widehat{\otimes}_{\tilde{W}_0} \tilde{\mathbb{I}} \twoheadrightarrow R$. We then define $\mathcal{F}(E,i_N,i_p):=m_R(\underline{\mathcal{F}}(E,i_N,i_p))$. The assignment $(E,i_N,i_p)\mapsto \mathcal{F}(E,i_N,i_p)$ satisfies the axioms of a $p$-adic modular form defined over $\tilde{\mathbb{I}}$, i.e. gives rise to an element of $V(N;\tilde{\mathbb{I}})$. Moreover, this $p$-adic modular form satisfies $\mathcal{F}|\langle z \rangle = [z]\mathcal{F}$ for all $z\in \Gamma$, and has the same  $q$-expansion at $\infty$ as $\mathcal{F}$.
\end{proof}
Let $\mathrm{art}_\mathbb{Q}:\mathbb{A}^\times \to \mathrm{Gal}(\mathbb{Q}^\mathrm{ab}/\mathbb{Q})$ be the Artin reciprocity map.
\begin{critical-character} \label{critical-character}
There exists a central critical character $\boldsymbol{\lambda} : M^{\times} \backslash M^{\times}_{\mathbb{A}}\to \tilde{\mathbb{I}}^\times$ so that for each $\tilde{P}_{(k,\varepsilon)}\in \mathrm{Spec}(\tilde{\mathbb{I}})(\bar{\mathbb{Q}}_p)$  we have that $\lambda_{(k,\varepsilon)}:=\tilde{P}_{(k,\varepsilon)}(\boldsymbol{\lambda})$  restricted to $\mathbb{A}^\times$ induces the inverse of the $p$-adic avatar of the central character of $\mathbf{f}_{(k,\varepsilon)}$. 
\end{critical-character}
\begin{proof}
Following \cite{Ho}, a critical character is a homomorphism 
\[\theta : \mathbb{Z}_p^\times \to \tilde{\mathbb{I}}^\times\]
which satisfies $\theta^2(z)=[z]$ for all $z\in \mathbb{Z}_p^\times$. There are exactly two choices for a critical character, and they differ by multiplication by $\omega^\frac{p-1}{2}$. We fix one such choice and set
\[\Theta :G_\mathbb{Q} \to \tilde{\mathbb{I}}^\times\] 
by $\Theta(\sigma)=\theta(\epsilon_{\mathrm{cyc}}(\sigma))$, where $\epsilon_{\mathrm{cyc}}:G_\mathbb{Q} \to \mathbb{Z}_p^\times$ is the cyclotomic character. Then the homomorphism $[\cdot]:\mathbb{Z}_p^\times\to \tilde{\mathbb{I}}^\times$ composed with $\tilde{P}_{(k,\varepsilon)}\in \mathrm{Spec}(\tilde{\mathbb{I}})(\bar{\mathbb{Q}}_p)$ is given by
\[ [z]_{(k,\varepsilon)}= \varepsilon(z)\psi(z)\omega^{-k}(z)z^k\]
for all $z\in \mathbb{Z}_p^\times$. Note that $[z]_{(k,\varepsilon)}$ is the $p$-adic avatar of the central character $\varepsilon\psi\omega^{-k}|\cdot|^{-k}_{\mathbb{A}}$ of $\mathbf{f}_{(k,\varepsilon)}$ because the $p$-adic avatar of the norm character induces the cyclotomic character.
We denote by $\theta_{(k,\varepsilon)}$ and $\Theta_{(k,\varepsilon)}$ the compositions
\[\theta_{(k,\varepsilon)}:\mathbb{Z}_p^\times\stackrel{\theta}{\longrightarrow}\tilde{\mathbb{I}}^\times \stackrel{\tilde{P}_{(k,\varepsilon)}}{\longrightarrow} \bar{\mathbb{Q}}_p \qquad 
\Theta_{(k,\varepsilon)}: G_\mathbb{Q} \stackrel{\Theta}{\longrightarrow}\tilde{\mathbb{I}}^\times \stackrel{\tilde{P}_{(k,\varepsilon)}}{\longrightarrow} \bar{\mathbb{Q}}_p\,. \]
We define a $\bar{\mathbb{Q}}_p$-valued character of $M^{\times}_{\mathbb{A}}$
\[\lambda_{(k,\varepsilon)}(x) = \Theta_{(k,\varepsilon)}(\mathrm{art}_\mathbb{Q}(\mathrm{Norm}_{M/\mathbb{Q}}(x)))\,,\]
and $\boldsymbol{\lambda} : M^{\times} \backslash M^{\times}_{\mathbb{A}}\to \mathbb{I}^\times$ by 
\[\boldsymbol{\lambda}(x)=\Theta(\mathrm{art}_\mathbb{Q}(\mathrm{Norm}_{M/\mathbb{Q}}(x)))\,.\]
Note that
\[ \mathbb{Z}_p^\times \stackrel{x\mapsto x_p}{\longrightarrow} \mathbb{A}^\times \stackrel{\mathrm{art}_\mathbb{Q}}{\longrightarrow} \mathrm{Gal}(\mathbb{Q}(\mu_{p^\infty})/\mathbb{Q})\stackrel{\epsilon_{\mathrm{cyc}}}{\longrightarrow} \mathbb{Z}_p^\times \]  
is given by $x\mapsto x^{-1}$. The nebentypus of $f_{(k,\varepsilon)}$ is $\varepsilon\psi\omega^{-k}$, and as we noted above, the $p$-adic avatar of the central character of $\mathbf{f}_{(k,\varepsilon)}$ is $[z]_{(k,\varepsilon)}=\theta_{(k,\varepsilon)}^2$. It follows that
\[ \lambda_{(k,\varepsilon)}|_{\mathbb{A}^\times}(x)= \lambda_{(k,\varepsilon)}(x_p)=\Theta^2_{(k,\varepsilon)}(\mathrm{art}_\mathbb{Q}(x_p))= \theta_{(k,\varepsilon)}^{-2}\]
as desired, where $x\in\mathbb{Z}_p^\times$, $x\in{\mathbb{A}^\times}$ is the idele with $x$ in the $p$-component and 1 in all other components.
\end{proof}
Take an extension of the ring of Witt vectors $W$ obtained by adjoining $\tilde{W}_0$ and, by abuse of notation, keep denoting it $W$.
Let $x(\mathfrak{A})_{/\tilde{\mathbb{I}}}=x(\mathfrak{A})_{/W}\otimes_W\tilde{\mathbb{I}}$.
We define a $\tilde{\mathbb{I}}$-valued measure $\mathrm{d}\mu_{\mathcal{F},{\mathfrak{A}}}$ on $\mathbb{Z}_p$
\[
\int_{\mathbb{Z}_p} \binom{x}{n}\mathrm{d}\mu_{\mathcal{F},{\mathfrak{A}}}=\binom{d}{n}\mathcal{F}(x(\mathfrak{A}),\omega_p(\mathfrak{A})) \quad \text{for all}\, n\geq 0\,. 
\]
Setting
\[d\mu_j=\boldsymbol{\lambda}(\mathfrak{A}_j^{-1})\mathrm{d}\mu_{\mathcal{F}^{(p)},\mathfrak{A}_j}\]  
and piecing these measures $\mu_j$ together, we get a $\tilde{\mathbb{I}}$-valued measure $\mathrm{d}\mu_\mathcal{F}$ on $\mathrm{Cl}^-_\infty$ so that for every continuous function $\xi:\mathrm{Cl}^-_\infty\to \tilde{\mathbb{I}}$ we have 
\[\int_{\mathrm{Cl}^-_\infty} \xi\mathrm{d}\mu_{\mathcal{F}}=\sum_{j=1}^{H^-}\int_{\mathbb{Z}_p^\times}\xi(\mathfrak{A}_j^{-1}x)\mathrm{d}\mu_j(x)\,.\]
Note that for an arithmetic point $\tilde{P}_{(k,\varepsilon)}\in \mathrm{Spec}(\tilde{\mathbb{I}})(\bar{\mathbb{Q}}_p)$ 
with $k\geq 2$, we have
\[\tilde{P}_{(k,\varepsilon)}(\mu_{\mathcal{F}})=\mu_{f_{(k,\varepsilon)}}\,,\]
where $\mu_{f_{(k,\varepsilon)}}$ is the measure attached to the cusp form $f_{(k,\varepsilon)}$ in Section \ref{sec:acycpadicL}.
We can define a $p$-adic analytic function $\mathcal{L}(\mathcal{F;\cdot,\cdot})$ on $\mathrm{Spf}(\tilde{\mathbb{I}})\times \mathrm{Cl}^-_\infty$ by
\[ \mathcal{L}(\mathcal{F};\tilde{P},\xi)=\int_{\mathrm{Cl}^-_\infty} \xi \mathrm{d}\tilde{P}(\mu_{\mathcal{F}})\,.\]
As a matter of fact, this is an Iwasawa function on $\mathrm{Spec}(W[[ \mathrm{Cl}^-_\infty]]\times \tilde{\mathbb{I}})$ in the sense that it is a global section of the structure sheaf of  $\mathrm{Spec}(W[[\mathrm{Cl}^-_\infty]]\times \tilde{\mathbb{I}})$,  i.e. an element in $W[[\mathrm{Cl}^-_\infty]]\times \tilde{\mathbb{I}}$. 

If the assume the Heegner hypothesis, namely that $N_{ns}=1$, the $p$-adic interpolation of the central critical L-value is justified by the Waldspurger-type of formula proved in a recent preprint \cite{Hs11} (Theorem A). Note that the Hypothesis A of that theorem is satisfied once we assume the Heegner hypothesis. The formula in Theorem A of \cite{Hs11} is different from the Hida's Walsdspurger-type of formula from \cite{HidaNV}. The heart of Hida's Waldspurger-type of formula is a delicate choice of a Schwartz--Bruhat function on $M_2(\mathbb{A})$ attaining the optimality of theta correspondence (Sections 1.4 and 1.7 of \cite{HidaNV}), and our construction of CM points (see Section 5.1 of \cite{Br11a}) is from the $p$-adic point of view precisely tailored for this choice. On the other hand, \cite{Hs11} follows ostensibly different approach of choosing local Whittaker functions in the process of decomposing the global period toric integrals into products of local ones, ultimately resulting in different hypotheses from the work of Hida. 

Thus, if $\chi:M^{\times} \backslash M^{\times}_{\mathbb{A}}\to\mathbb{C}^\times$ is an anticyclotomic arithmetic Hecke character such that $\chi(a_\infty)=a_\infty^{m(1-c)}$ for some $m\geq 0$ and of $p$-power conductor, and $k\geq 2$, we have
 
\begin{equation}\label{eq:i-adic-familyinterpolation}
\left(\frac{\mathcal{L}(\mathcal{F};\tilde{P}_{(k,\varepsilon)},\widehat{\chi})}{\mathrm{\Omega}_p^{k+2m}}\right)^2=C(k,\varepsilon,\lambda)\frac{L(\frac{1}{2},\hat{\pi}_{\mathbf{f}_{(k,\varepsilon)}}\otimes \tilde{P}_{(k,\varepsilon)}(\boldsymbol{\lambda})\chi)}{\left(\mathrm{\Omega}_\infty^{k+2m}\right)^2}
\end{equation}
for an explicit constant $C(k,\varepsilon,\lambda)$.

\end{document}